\newtheorem{theorem}{Theorem}[section]
\newtheorem{lemma}[theorem]{Lemma}
\newtheorem{remark}[theorem]{Remark}
\newtheorem{proposition}[theorem]{Proposition}
\newtheorem{corollary}[theorem]{Corollary}
\newtheorem{definition}[theorem]{Definition}
\newtheorem{example}[theorem]{Example}
\newenvironment{proof}{\begin{trivlist} \item[]{\em Proof.}}{\end{trivlist}}
\newcommand\be{\begin{equation}}
\newcommand\ee{\end{equation}}
\newcommand\bn{\begin{eqnarray}}
\newcommand\en{\end{eqnarray}}
\newcommand\bns{\begin{eqnarray*}}
\newcommand\ens{\end{eqnarray*}}
\newcommand\bd{\begin{definition}}
\newcommand\ed{\end{definition}}
\newcommand\br{\begin{remark}}
\newcommand\er{\end{remark}}
\newcommand\bt{\begin{theorem}}
\newcommand\et{\end{theorem}}
\newcommand\bp{\begin{proposition}}
\newcommand\ep{\end{proposition}}
\newcommand\bc{\begin{corollary}}
\newcommand\ec{\end{corollary}}
\newcommand\bl{\begin{lemma}}
\newcommand\el{\end{lemma}}
\newcommand\pf{\begin{proof}}
\newcommand\qed{\end{proof}\eop}
\newcommand\bN{{\mathbb N}}
\newcommand{\stirl}[2]{\genfrac{\{}{\}}{0pt}{}{#1}{#2}}
\def\eop{\hfill\rule{2.0mm}{2.0mm}}
\begin{document} 

\title{Divisibility of the Sums of the Power of Consecutive Integers}

\author{Tian-Xiao He$^{1}$ and Peter J.-S. Shiue$^2$ \\
{\small $^{1}$Department of Mathematics}\\
 {\small Illinois Wesleyan University, Bloomington, Illinois 61702, USA}\\
{\small $^2$ Department of Mathematical Sciences}\\
{\small University of Nevada, Las Vegas}\\
{\small Las Vegas, Nevada,  89154-4020, USA}\\
}

\date{\small Dedicated to Professor Anthony G. Shannon on the occasion of his 85th birthday}


\maketitle
\setcounter{page}{1}
\pagestyle{myheadings}
\markboth{T.-X.  He, and P. J.-S. Shiue}
{Divisibility of the Sums of the Power of Consecutive Integers } 

\begin{abstract}
\noindent 
We study the divisibility of the sums of the odd power of consecutive integers, $S(m,k)=1^{mk}+2^{mk}+\cdots+k^{mk}$ and $1^k+2^k+\cdots+n^k$ for odd integers $m$ and $k$, by using the Girard-Waring identity. Faulhaber's approach for the divisibilities is discussed. Some expressions of power sums in terms of Stirling numbers of the second kind are represented.

\vskip .2in
\noindent
AMS Subject Classification: 05A15, 11B99,11B83. 

\vskip .2in
\noindent
{\bf Key Words and Phrases:} Divisibility, sum of powers of consecutive integers, Girard-Waring identity, recursive 
sequence, arithmetic series, Faulhaber's theorem.
\end{abstract}

\section{Introduction}

Sums of powers of integers $1^n+2^n+\cdots+m^n$ have been studied for hundreds of years and even now there is still a steady stream of articles published on the subject. Jacques Bernoulli (\cite{Ber}, pp. 95--97) had introduced the numbers called after his name to evaluate the sum of the $n$-th powers of the first $m$ integers. He then proved the following summation formula (see, for example, (6.78) of Graham, Knuth, and Patashnik \cite{GKP} and Theorem 1.19 of Kerr \cite{Kerr}) .

\bn\label{0.0}
&&\sum^n_{\ell=1}\ell^k=\frac{1}{k+1}\sum^k_{\ell =0}(-1)^\ell B_\ell \binom{k+1}{\ell }n^{k-\ell +1}\nonumber\\
&=&\frac{1}{k+1}\left(B_{k+1}(n+1)-B_{k+1}(1)\right),
\en
where $n, k\geq 1$ and $B_\ell $ are the Bernoulli numbers with $B_1=-1/2$. 

The Faulhaber theorem states that the sum of the odd powers  
\[
1^{2m-1}+2^{2m-1}+\cdots+n^{2m-1}
\]
can be expressed as a polynomial of the triangular number $T_n=n(n+1)/2$. Let $\lambda =m(m+2x+1)$ be the sum of $x+1,x+2,\ldots, x+m$. Then extended and generalized results of the sums of integer powers in terms of the sum of arithmetic series, $\lambda$, are surveyed and developed in Chen, Fu, and Zhang \cite{CFZ}, which are given as 

\be\label{0.0-2}
\sum^n _{\ell =1}(x+\ell)^{2k -1}=\sum^k _{\ell =1}\frac{\lambda^\ell }{2k }\sum^k _{j =\ell }\binom{2k }{2j }\binom{j }{\ell }\left( x+\frac{1}{2}\right)^{2j -2\ell }B_{2k -2j }\left(\frac{1}{2}\right).
\ee

In this paper, we will study the divisibility of the sums of powers by using the Girard-Waring identity of the sums of two power terms, $x^n+y^n$. 
Albert Girard published a class of identities in Amsterdam in 1629 and Edward Waring published similar material in Cambridge in 1762-1782, which are referred as the Girard-Waring identities later. These identities may be derived from the earlier work of Sir Isaac Newton. Surveys and some applications of these identities can be found in  Comtet \cite{Com} (p. 198), Gould \cite{Gou}, Shapiro and one of the authors \cite{HS16}, and \cite{HS20}. Nie, Chen and the author's \cite{HSNC} give a different approaches to derive the Girard-Waring identities by using the Binet formula of recursive sequences and divided differences. Meanwhile, this approach offers some formulas and identities that have more wider applications, for instance, the transformations of certain recursive sequences to the Chebyshev polynomials of the first kind and the Chebyshev polynomials of the second kind shown in Weng and the authors \cite{HSW}. Recently, \cite{HS21} present a general rule of construction of identities for recursive sequences by using sequence transformation techniques developed in \cite{HSNC}. In this sense, this paper is a derivative, successor, and development of \cite{HSNC} and \cite{HS21}.

The Girard-Waring identity and it's Binet form can be presented by 
\bn
&&x^n + y^n = \sum_{0 \leq k \leq \left[n/2\right]}(-1)^k \frac{n}{n-k}\binom{n-k}{k}(xy)^k(x + y)^{n-2k},\label{eq:1.1}\\
&&\frac{x^{n+1}-y^{n+1}}{x-y}=\sum_{0 \leq k \leq \left[n/2\right]}(-1)^k \binom{n-k}{k}(xy)^k(x + y)^{n-2k}.\label{eq:1.2}
\en
Shapiro and one of the authors \cite{HS16} used Riordan array approach to establish the Binet type Girard-Waring identity \eqref{eq:1.2}. \cite{HSNC} establishes the formula 
\bns
&&\left( \frac{a_1-y  a_0}{x -y }\right) x ^n- \left(\frac{a_1-x  a_0}{x -y }\right) y ^n=a_{1}(x +y )^{n-1}\\
&&\quad +\sum^{[n/2]}_{j=1}\frac{1}{j}\binom{n-j-1}{j-1}
(-1)^j(x  +y )^{n-2j-1}(x y )^{j}\left( j(x +y )a_{0}+(n-2j)a_{1}\right),
\ens
which includes Binet Girard-Waring identity as a special case of $a_0=0$ and $a_1=1$.  

There are some alternative forms of the Girard-Waring identity. As an example, we present the following one. If $x+y+z=0$, then the Giraid-Waring identity gives 

\bns
&&x^n+y^n=\sum_{0 \leq k \leq [n / 2]}\qquad (-1)^k \frac{n}{n-k} \binom{n-k}{k} (-z)^{n-2k}(xy)^k\\
&=& (-1)^nz^n+\sum_{1 \leq k \leq [n / 2]}\qquad (-1)^{n-k} \frac{n}{n-k} \binom{n-k}{k} z^{n-2k}(xy)^k,
\ens
which implies 

\[
x^n+y^n-(-1)^nz^n=\sum_{1 \leq k \leq [n / 2]}\qquad (-1)^{n-k} \frac{n}{n-k} \binom{n-k}{k}z^{n-2k}(xy)^k.
\]
Thus, when $n$ is even, we have formula 

\be\label{1.1-2}
x^n+y^n-z^n=\sum_{1 \leq k \leq n / 2}\qquad (-1)^{n-k} \frac{n}{n-k} \binom{n-k}{k}z^{n-2k}(xy)^k,
\ee
while for odd $n$ we have 

\be\label{1.1-3}
x^n+y^n+z^n=\sum_{1 \leq k \leq [n / 2]}\qquad (-1)^{n-k} \frac{n}{n-k} \binom{n-k}{k}z^{n-2k}(xy)^k,
\ee
where $x+y+z=0$. Particularly, if $n=3$, then 

\be\label{1.1-4}
x^3+y^3+z^3=3xyz,
\ee
which was shown in \cite{HS20}. Furthermore, the following proposition was represented in \cite{HS20}, which can also be observed directly from \eqref{1.1-3}.

\begin{proposition}\label{pro:3.3}
Let $x,y\in{\bN}$. Then $pxy(x+y)|(x^p+y^p-(x+y)^p)$ when $p \geq 3 $ is a prime. 
\end{proposition}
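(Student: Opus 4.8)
The plan is to specialize the alternative Girard--Waring identity \eqref{1.1-3} to the case $z=-(x+y)$. With this choice $x+y+z=0$, so \eqref{1.1-3} applies verbatim; and since $p\ge 3$ is prime it is odd, whence $z^p=\bigl(-(x+y)\bigr)^p=-(x+y)^p$. Substituting, the left-hand side $x^p+y^p+z^p$ becomes exactly $x^p+y^p-(x+y)^p$, and I obtain the closed form
\[
x^p+y^p-(x+y)^p=\sum_{1\le k\le (p-1)/2}(-1)^{p-k}\,\frac{p}{p-k}\binom{p-k}{k}\bigl(-(x+y)\bigr)^{p-2k}(xy)^k .
\]
The rest of the argument is to show that every summand on the right is divisible by $p\,xy\,(x+y)$, so that the whole sum is.

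First I would extract the factors $xy$ and $x+y$ from a generic term. Each summand carries $(xy)^k$ with $k\ge 1$, so $xy$ divides it. Moreover the exponent of $-(x+y)$ is $p-2k$, and the summation range $1\le k\le (p-1)/2$ forces $p-2k\ge 1$; hence $(x+y)$ divides $\bigl(-(x+y)\bigr)^{p-2k}$ as well. Since $xy$ and $x+y$ are cleanly separated (one sits in $(xy)^k$, the other in the power of $-(x+y)$), each term is divisible by $xy(x+y)$.

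The remaining and only substantive point is divisibility of the coefficient $c_k:=\frac{p}{p-k}\binom{p-k}{k}$ by $p$. I would first record that $c_k$ is an integer via the elementary identity $\frac{n}{n-k}\binom{n-k}{k}=\binom{n-k}{k}+\binom{n-k-1}{k-1}$ (a one-line computation writing $\frac{k}{n-k}\binom{n-k}{k}=\binom{n-k-1}{k-1}$), specialized to $n=p$; both terms are ordinary binomial coefficients, so $c_k\in\bZ$. Then, clearing denominators, $(p-k)\,c_k=p\binom{p-k}{k}$. Since $1\le k\le (p-1)/2$, we have $1\le p-k\le p-1$, so $\gcd(p-k,p)=1$; as $p$ is prime it follows that $p\mid c_k$. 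Combining this with the previous paragraph, each summand is divisible by $p\,xy\,(x+y)$, and therefore so is the sum $x^p+y^p-(x+y)^p$, which is the assertion.

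The main obstacle is precisely the coefficient divisibility: the reduction to \eqref{1.1-3} is purely formal, and pulling out $xy$ and $x+y$ is a matter of bookkeeping on exponents, but the factor $p$ does not appear on the surface of $c_k$ and requires the integrality of $c_k$ together with the coprimality $\gcd(p-k,p)=1$ valid exactly on the stated index range. I expect this to be the only step needing genuine (if short) justification; everything else is substitution and counting powers.
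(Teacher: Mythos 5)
Your proof is correct and takes essentially the same route the paper indicates for this proposition: it reads the result directly off \eqref{1.1-3} (equivalently, the Girard--Waring identity \eqref{eq:1.1} with $z=-(x+y)$), pulling $xy$ and $x+y$ out of the cleanly separated factors of each summand and $p$ out of the integer coefficient $\frac{p}{p-k}\binom{p-k}{k}$ via $\gcd(p-k,p)=1$. The paper itself gives no detailed argument, citing \cite{HS20} and the observation from \eqref{1.1-3}, so your write-up merely fills in the coefficient integrality and divisibility details --- using the same identity $\frac{n}{n-k}\binom{n-k}{k}=\binom{n-k}{k}+\binom{n-k-1}{k-1}$ that the paper invokes later in the proof of Proposition \ref{pro:4.4} --- which is exactly the justification the paper leaves implicit.
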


In the next section and Section $3$, we give an approach to find divisibility of two type of sums of powers of consecutive integers, respectively. In Section $4$, we present another approach to study the divisibility of the powers of integers in terms of arithmetic series by using the Faulhaber's theorem and the formula shown in Chen, Fu, and Zhang \cite{CFZ}. Finally, we discuss the power sums expressed in terms of Stirling numbers of the second kind in Section $5$.

\section{Divisibility of sums of powers of consecutive integers}

It is well known that the sum of cubes  of three consecutive integers is always divisible by $9$ (see, for example, Rosen \cite{Ros}). Does it hold for other positive integer $k$ for which the sum of $k^{th}$ powers of $k$ consecutive integers is divisible for $k^2$? It is easy to check the answer is negative for $k=2,4$, etc. However, for odd $k$ the situation is quite different. In Ho, Mellblom, and Frodyma \cite{HMF}, it has been shown that for any odd positive integer $k$ and $m$, the sum of the $(mk)^{th}$ powers of consecutive integers 

\be\label{4.1}
n^{mk}+(n+1)^{mk}+\cdots+ (n+k-1)^{mk}
\ee
of any $k$ consecutive integers is always divisible by $k^2$. We will show this result can be proved by using the Girard-Waring identity. More precisely, let us denote by $S(m,k,n)$ the above sum $\pmod {k^2}$ of $(mk)^{th}$ powers of $k$ consecutive terms beginning with the integer $n$. We may drop the symbol $n$ and write $S(m,k)$ instead, namely,

\be\label{0-0-0}
S(m,k):=1^{mk}+2^{mk}+\cdots+k^{mk}.
\ee
 
\begin{proposition}\label{pro:4.1}
Let $x,y\in{\bN}$, and let $p\geq 3$ be a prime number. If $x+y=p$, then $x^p+y^p\equiv 0\,(mod\, p^2)$. Furthermore, for any odd positive integer $m$, there holds

\be\label{4.2}
x^{pm}+y^{pm}\equiv 0\, (mod\, p^2).
\ee
\end{proposition}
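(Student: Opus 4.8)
The plan is to read both congruences off Proposition~\ref{pro:3.3}, so that beyond a specialization and one elementary factorization essentially no new computation is required. For the first assertion, I would substitute the hypothesis $x+y=p$ into the relation $pxy(x+y)\mid\left(x^p+y^p-(x+y)^p\right)$ furnished by Proposition~\ref{pro:3.3}. The divisor on the left becomes $p^2xy$, whence $p^2xy\mid\left(x^p+y^p-p^p\right)$ and in particular $p^2\mid\left(x^p+y^p-p^p\right)$; here $xy$ is a positive integer once $x,y\geq1$, and the degenerate case in which one of $x,y$ is $0$ is immediate, since then $x^p+y^p=p^p$. Because $p\geq3$ forces $p^p=p^2\cdot p^{p-2}$ with $p-2\geq1$, the term $p^p$ is itself divisible by $p^2$, and therefore $p^2\mid\left(x^p+y^p\right)$. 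This is exactly \eqref{4.2} with $m=1$.

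For general odd $m$ I would reduce to the case just proved rather than run a new argument. Put $a:=x^p$ and $b:=y^p$. Since $m$ is odd, the polynomial identity
\be
a^m+b^m=(a+b)\sum^{m-1}_{i=0}(-1)^i a^{m-1-i}b^i
\ee
holds with the second factor an integer, so that $x^{pm}+y^{pm}=(x^p+y^p)\,R$ for some $R\in\bZ$. The first part already gives $p^2\mid\left(x^p+y^p\right)$, and hence $p^2\mid\left(x^{pm}+y^{pm}\right)$, which is \eqref{4.2}.

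A self-contained alternative, more in the spirit of the paper, would expand \eqref{eq:1.1} with $n=pm$ and $x+y=p$, giving $x^{pm}+y^{pm}$ as a sum of terms each carrying the factor $p^{pm-2k}$. As $pm$ is odd, every exponent $pm-2k$ is odd and thus at least $1$: the terms with $k\leq(pm-3)/2$ already carry $p^3$, while the single extremal term $k=(pm-1)/2$ carries only $p^1$ but picks up a compensating factor from its Girard--Waring coefficient, which one computes to equal $pm$. Thus every term is divisible by $p^2$. I expect the only delicate point along this route to be the bookkeeping for the extremal term, namely verifying that $\frac{n}{n-k}\binom{n-k}{k}$ is an integer and that it equals precisely $pm$ when $k=(pm-1)/2$; the route through Proposition~\ref{pro:3.3} and the odd-$m$ factorization sidesteps this entirely and is the one I would present.
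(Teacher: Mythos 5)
Your proposal is correct and follows essentially the same route as the paper: specialize Proposition~\ref{pro:3.3} at $x+y=p$ to get $p^2\mid(x^p+y^p-p^p)$, absorb $p^p$ using $p\geq 3$, and then reduce odd $m$ to the case $m=1$. Your factorization $a^m+b^m=(a+b)\sum_{i=0}^{m-1}(-1)^ia^{m-1-i}b^i$ is just the identity underlying the paper's step of raising $x^p\equiv -y^p\ (mod\ p^2)$ to the odd power $m$, so the difference is cosmetic.
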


\begin{proof}
Substituting $x+y=z=p$ into Proposition \ref{pro:3.3}, we immediately get $p^2xy|(x^p+y^p-p^p)$, 
which implies $x^p+y^p\equiv 0\,(mod\, p^2)$, or equivalently, $x^p\equiv -y^p\, (mod\, p^2)$. If $m$ is an odd positive integer, then 

\[
(x^p)^m\equiv (-y^p)^m\, (mod\, p^2).
\]
Hence, we obtain \eqref{4.2}.
\end{proof}\eop

\begin{corollary}\label{cor:4.2}
Let $p\geq 3$ be a prime number, and let $m$ be an odd positive integer. Denote 

\[
S(m,p):=1^{mp}+2^{mp}+\cdots+p^{mp}
\]
and 

\[
S'(m,p):=1^{mp}+2^{mp}+\cdots+(p-1)^{mp}.
\]
Then $S(m,p)$ and $S'(m,p)\equiv 0\,(mod\,p^2)$.
\end{corollary}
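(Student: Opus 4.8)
The plan is to reduce both congruences to Proposition \ref{pro:4.1} by pairing the summands symmetrically about $p/2$. First I would observe that $S'(m,p)=\sum_{j=1}^{p-1}j^{mp}$ runs over an \emph{even} number of terms, since $p\geq 3$ is odd and hence $p-1$ is even. This lets me group the terms into the $(p-1)/2$ pairs $\{\,j,\ p-j\,\}$ for $j=1,2,\ldots,(p-1)/2$. Because $p$ is odd, no index can equal its partner (that would force $2j=p$), so this is a genuine perfect matching: every summand appears in exactly one pair, with no leftover middle term.

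For each such pair we have $j+(p-j)=p$, so Proposition \ref{pro:4.1} applies with $x=j$ and $y=p-j$, yielding $j^{mp}+(p-j)^{mp}\equiv 0 \pmod{p^2}$ for every odd positive integer $m$. Summing the $(p-1)/2$ pairs, each of which vanishes modulo $p^2$, I would conclude $S'(m,p)\equiv 0 \pmod{p^2}$.

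To pass from $S'$ to $S$, I would simply note $S(m,p)=S'(m,p)+p^{mp}$. Since $m\geq 1$ and $p\geq 3$, the exponent satisfies $mp\geq 3\geq 2$, so $p^2\mid p^{mp}$ and hence $p^{mp}\equiv 0 \pmod{p^2}$. Combining this with the previous paragraph gives $S(m,p)\equiv 0 \pmod{p^2}$ as well.

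There is essentially no hard step here: the entire argument is the symmetric pairing together with the observation that the parity of $p$ guarantees a matching with no fixed point, so that Proposition \ref{pro:4.1} can be applied term by term. The only point deserving a moment's care is checking that the extra top summand $p^{mp}$ in $S(m,p)$ contributes nothing modulo $p^2$, which is immediate from $mp\geq 2$.
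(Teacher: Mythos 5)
Your proposal is correct and is essentially the paper's own argument: both pair each summand $j^{mp}$ with $(p-j)^{mp}$ so that Proposition \ref{pro:4.1} applies to each pair, and both dispose of the top term via $p^2\mid p^{mp}$. The only difference is cosmetic --- the paper establishes $S(m,p)\equiv 0\pmod{p^2}$ first and then strips off $p^{mp}$ to get $S'(m,p)$, while you prove $S'(m,p)$ first and add $p^{mp}$ back.
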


\begin{proof}
From Proposition \ref{pro:4.1} for $k=1,2,\ldots, [p/2]$ 

\[
k^{mp}+(p-k)^{mp}\equiv 0\, (mod\, p^2).
\]
Hence 

\[
S(m,p)=\sum^{[p/2]}_{k=1}(k^{mp}+(p-k)^{mp})+p^{mp}\equiv 0\, (mod\, p^2).
\]
$S'(m,p)\equiv 0\,(mod\,p^2)$ can be obtained because $p^{mp}\equiv 0\, (mod\, p^2)$ ($p\geq 3$). 
\end{proof}\eop

\begin{remark}\label{rem:4.1} The result on $S(m,p)$ of Corollary \ref{cor:4.2} is included in Theorem $2$ of \cite{HMF} by using  a different approach.
\end{remark}

A similar process can be applied to extend the above result.

\begin{proposition}\label{pro:4.3}
Let $p\geq 3$ be a prime number, and let $m$ be an odd positive integer. For any $t\in{\bN}$, denote 

\[
\hat S(m,p^t):=1^{mp}+2^{mp}+\cdots+(p^t)^{mp}
\]
and 

\[
\hat S'(m,p^t):=1^{mp}+2^{mp}+\cdots+(p^t-1)^{mp}.
\]

Then $\hat S(m,p^t)$ and $\hat S'(m,p^t)\equiv 0\,(mod\,p^{t+1})$.
\end{proposition}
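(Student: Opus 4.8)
The plan is to mirror the pairing argument used for Corollary \ref{cor:4.2}, but to upgrade the modulus from $p^2$ to $p^{t+1}$. The engine of Corollary \ref{cor:4.2} was Proposition \ref{pro:4.1}, which pairs $k$ with $p-k$ and exploits $k^{mp}+(p-k)^{mp}\equiv 0\pmod{p^2}$. Here the natural pairing inside $\hat S'(m,p^t)=\sum_{j=1}^{p^t-1}j^{mp}$ is $j\leftrightarrow p^t-j$, since $j+(p^t-j)=p^t$. So the first order of business is to establish the $p^t$-analogue of Proposition \ref{pro:4.1}: whenever $x+y=p^t$ and $m$ is odd, $x^{mp}+y^{mp}\equiv 0\pmod{p^{t+1}}$.

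First I would prove this analogue by substituting $z=x+y=p^t$ into Proposition \ref{pro:3.3}. That proposition gives $pxy(x+y)\mid(x^p+y^p-(x+y)^p)$, which with $x+y=p^t$ reads $p^{t+1}xy\mid(x^p+y^p-p^{tp})$. In particular $x^p+y^p\equiv p^{tp}\pmod{p^{t+1}}$. Since $tp\ge t+1$ for every $t\ge 1$ and $p\ge 3$, the residue $p^{tp}$ vanishes modulo $p^{t+1}$, so $x^p+y^p\equiv 0\pmod{p^{t+1}}$, i.e. $x^p\equiv -y^p\pmod{p^{t+1}}$. Raising to the odd power $m$ then yields $x^{mp}+y^{mp}\equiv 0\pmod{p^{t+1}}$, exactly as in Proposition \ref{pro:4.1}.

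With the analogue in hand, I would assemble the two sums. Because $p^t$ is odd, the involution $j\mapsto p^t-j$ on $\{1,\dots,p^t-1\}$ has no fixed point, so it partitions that set into $(p^t-1)/2$ pairs; summing the pairwise congruences gives $\hat S'(m,p^t)\equiv 0\pmod{p^{t+1}}$. For the full sum I would add back the top term, $\hat S(m,p^t)=\hat S'(m,p^t)+(p^t)^{mp}$, and note $(p^t)^{mp}=p^{tmp}$ with $tmp\ge t+1$ (as $mp\ge 3$ and $t\ge 1$), so it too is $\equiv 0\pmod{p^{t+1}}$; hence $\hat S(m,p^t)\equiv 0\pmod{p^{t+1}}$.

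The only genuinely delicate points are the two exponent inequalities that control where powers of $p$ land: one must check $tp\ge t+1$ so that the leftover $(x+y)^p=p^{tp}$ from Proposition \ref{pro:3.3} is absorbed into the modulus, and $tmp\ge t+1$ so that the unpaired term $(p^t)^{mp}$ is negligible. Both are immediate from $p\ge 3$, $m\ge 1$, $t\ge 1$, but they are the structural heart of why the modulus improves to $p^{t+1}$ rather than staying at $p^2$. A direct alternative, bypassing Proposition \ref{pro:3.3}, is to expand $(p^t-j)^{mp}$ by the binomial theorem: the constant term cancels $j^{mp}$ (as $mp$ is odd), the linear term contributes $mp\cdot p^t\cdot j^{mp-1}=m\,p^{t+1}j^{mp-1}$, and every higher term carries $p^{it}$ with $it\ge 2t\ge t+1$; this reproves the pairwise congruence and is the step I would fall back on if the hidden cofactor in Proposition \ref{pro:3.3} needed finer control.
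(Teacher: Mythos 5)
Your proposal is correct and follows essentially the same route as the paper: substituting $x+y=p^t$ into Proposition \ref{pro:3.3} to get $x^p+y^p\equiv 0\pmod{p^{t+1}}$, raising to the odd power $m$, and then pairing $j$ with $p^t-j$ while absorbing the top term $(p^t)^{mp}$. Your only additions---explicitly checking the exponent inequalities $tp\ge t+1$ and $tmp\ge t+1$, which the paper leaves implicit, and sketching a binomial-expansion fallback---are refinements rather than a different argument.
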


\begin{proof}
Let $x,y\in {\bN}$ with $x+y=p^t$, and let $p\geq 3$ be a prime number. Then from Proposition \ref{pro:3.3}, 

\[
p^{t+1}xy|(x^p+y^p-(p^t)^p),
\]
which implies $x^p+y^p\equiv 0\,(mod\, p^{t+1})$. If $m$ is an odd positive integer, then 

\[
x^{pm}+y^{pm}\equiv 0\, (mod\, p^{t+1})
\]
when $x+y=p^t$. Thus 

\bns
&&\hat S(m,p^t)=1^{mp}+2^{mp}+\cdots +(p^t-2)^{pm}+(p^t-1)^{pm}+(p^t)^{mp}\\
&=&(p^t)^{mp}+(1^{mp}+(p^t-1)^{mp})+(2^{mp}+(p^t-2)^{pm})+\cdots \equiv 0\,(mod \, p^{t+1}).
\ens
$\hat S'(m,p^t)\equiv 0\,(mod\,p^{t+1})$ follows. Here, $\hat S(m, p^t)\equiv 0\, (mod\, p^{t+1})$ because $(p^t)^{mp}\equiv 0\,(mod\, p^{t+1})$. 
\end{proof}\eop

\begin{remark}\label{rem:4.2} Formula (4) of Hsu \cite{Hsu} presents 
\[
\sum^n_{k=1}k^m=\sum^m_{j=1}j!\stirl{m}{j}\binom{n+1}{j+1},
\]
where $\stirl {m}{j}$ are the Stirling numbers of the second kind. Thus for an odd integer $m$ 
\[
1^{mp}+2^{mp}+\cdots+(p^t)^{mp}=\sum^{mp}_{j=1}j!\stirl{mp}{j}\binom{p^t+1}{j+1}.
\]
Then, from Proposition \ref{pro:4.3}, we have 
\[
\sum^{mp}_{j=1}j!\stirl{mp}{j}\binom{p^t+1}{j+1}\equiv 0\, (mod\, p^{t+1})
\]
for any prime $p\geq 3$.
\end{remark}

We now consider the general case of $p=k\in{\bN}$ is any odd positive integer. 

\begin{proposition}\label{pro:4.4}
Let $x,y\in{\bN}$ with $x+y=k$, an odd positive integer. Then 

\be\label{4.3}
x^k+y^k\equiv 0\,(mod\, k^2).
\ee
\end{proposition}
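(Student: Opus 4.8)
The plan is to apply the Girard--Waring identity \eqref{eq:1.1} directly with $n=k$ and then substitute the hypothesis $x+y=k$. This yields
\[
x^k+y^k=\sum_{0\le j\le (k-1)/2}(-1)^j\frac{k}{k-j}\binom{k-j}{j}(xy)^j\,k^{k-2j},
\]
where the upper limit is $[k/2]=(k-1)/2$ because $k$ is odd. Since $x^k+y^k$ is a symmetric polynomial in $x$ and $y$, each coefficient $\frac{k}{k-j}\binom{k-j}{j}$ is an integer (indeed it equals $\binom{k-j}{j}+\binom{k-j-1}{j-1}$), so every summand is an integer multiple of $(xy)^j k^{k-2j}$. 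The whole proof then reduces to showing that each summand is divisible by $k^2$.

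For the generic terms, I would observe that since $k$ is odd the exponent $k-2j$ is odd and runs through the values $k,k-2,\dots,3,1$ as $j$ increases. Hence for every $j$ with $0\le j\le (k-3)/2$ we have $k-2j\ge 3\ge 2$, so the factor $k^{k-2j}$ already carries $k^2$, and the corresponding summand vanishes modulo $k^2$ regardless of the value of $xy$.

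The one remaining term, and the crux of the argument, is the top term $j=(k-1)/2$, for which $k-2j=1$; here the power of $k$ supplies only a single factor, so I must extract the missing factor of $k$ from the coefficient itself. The key computation is that at $j=(k-1)/2$ one has $k-j=(k+1)/2$ and $\binom{k-j}{j}=\binom{(k+1)/2}{(k-1)/2}=(k+1)/2$, whence
\[
\frac{k}{k-j}\binom{k-j}{j}=\frac{k}{(k+1)/2}\cdot\frac{k+1}{2}=k.
\]
Thus the top term equals $(-1)^{(k-1)/2}k^2(xy)^{(k-1)/2}$, which is manifestly divisible by $k^2$. Summing the contributions then gives $x^k+y^k\equiv 0\ (\mathrm{mod}\,k^2)$.

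I expect the only genuine obstacle to be this last coefficient identity: unlike the prime case of Proposition \ref{pro:4.1}, where primality lets one invoke Proposition \ref{pro:3.3}, here $k$ may be composite, so no factorization of $x^k+y^k-(x+y)^k$ is available and the extra factor of $k$ must come from the exact cancellation in the leading Girard--Waring coefficient. Once that identity is in hand, the remaining bookkeeping on the powers of $k$ is routine.
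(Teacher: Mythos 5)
Your proof is correct and follows essentially the same route as the paper's: apply the Girard--Waring identity \eqref{eq:1.1} with $n=k$, substitute $x+y=k$ so that every term with $k-2j\ge 3$ is trivially divisible by $k^2$, and verify via $\binom{(k+1)/2}{(k-1)/2}=(k+1)/2$ that the leading coefficient at $j=(k-1)/2$ collapses to exactly $k$, making the top term $(-1)^{(k-1)/2}k^2(xy)^{(k-1)/2}$. The only cosmetic difference is that the paper moves the $j=0$ term $(x+y)^k=k^k$ to the left-hand side before arguing, whereas you absorb it into the generic terms, which is equally valid.
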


\begin{proof}
Note $[k/2]=(k-1)/2$ when $k\in{\bN}$ is an odd number, and for $1\leq i\leq (k-1)/2$ $\frac{k}{k-i}\binom{k-i}{i}$ are integers, since they are the coefficients of the Lucas polynomial. It can also be proved from the following observation:

\bns
&&\frac{k}{k-i} \binom{k-i}{i} =\binom{k-i}{i}+\binom{k-i-1}{i-1}\\
&&=2\binom{k-i}{i}-\binom{k-i-1}{i}.
\ens
Then, from the Girard-Waring identity \eqref{eq:1.1} we have 

\bns
&&x^k+y^k-(x+y)^k\\
&=&\sum_{1\leq i\leq (k-1)/2}(-1)^i\frac{k}{k-i}\binom{k-i}{i}(x+y)^{k-2i}(xy)^i\\
&=&\sum_{1\leq i\leq (k-3)/2}(-1)^i\frac{k}{k-i}\binom{k-i}{i}(x+y)^{k-2i}(xy)^i\\
&&\quad +(-1)^{(k-1)/2}\frac{2k}{k+1}\binom{\frac{k+1}{2}}{\frac{k-1}{2}}(x+y)(xy)^{(k-1)/2}\\
&=&\sum_{1\leq i\leq (k-3)/2}(-1)^i\frac{k}{k-i}\binom{k-i}{i}k^{k-2i}(xy)^i +(-1)^{(k-1)/2}k^2(xy)^{(k-1)/2},
\ens
where every term of the sum on the rightmost contains the factor $k^2$ because that the exponent of $k^{k-2i}$, $k-2i\geq 3$. Note the coefficient of the last term comes from

\[
(-1)^{(k-1)/2}\frac{2}{k+1}\binom{\frac{k+1}{2}}{\frac{k-1}{2}}=(-1)^{(k-1)/2}\frac{2}{k+1}\frac{k+1}{2}=(-1)^{(k-1)/2}.
\]
Thus, we obtain 

\[
x^k+y^k-k^k\equiv 0\,\pmod{k^2},
\]
which implies \eqref{4.3}. 
\end{proof}\eop

\begin{proposition}\label{pro:4.5}
Let $k$ be an odd positive integer, and let $m$ be an odd positive integer. Denote 

\[
S(m,k):=1^{mk}+2^{mk}+\cdots +k^{mk}
\]
and 

\[
S'(m,k):=1^{mk}+2^{mk}+\cdots +(k-1)^{mk}.
\]
Then 

\be\label{4.4}
S(m,k), S'(m,k)\equiv 0\,(mod\, k^2).
\ee
\end{proposition}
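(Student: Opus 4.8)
The plan is to follow the same template that established Corollary~\ref{cor:4.2} and Proposition~\ref{pro:4.3}, now driving everything with Proposition~\ref{pro:4.4}. First I would upgrade Proposition~\ref{pro:4.4} from the $k$-th power to the $(mk)$-th power. Since $x+y=k$ gives $x^k+y^k\equiv 0\pmod{k^2}$, we may rewrite this as $x^k\equiv -y^k\pmod{k^2}$. Raising both sides to the $m$-th power and using that $m$ is odd, so that $(-y^k)^m=-y^{mk}$, yields $x^{mk}\equiv -y^{mk}\pmod{k^2}$, that is,
\[
x^{mk}+y^{mk}\equiv 0\pmod{k^2}\qquad\text{whenever }x+y=k.
\]

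Next I would exploit the oddness of $k$ to pair the summands symmetrically. Since $k$ is odd we have $[k/2]=(k-1)/2$, and for each $i$ with $1\le i\le (k-1)/2$ the integers $i$ and $k-i$ satisfy $i+(k-i)=k$, so the displayed congruence applies to each pair. Grouping the $k-1$ terms of $S'(m,k)$ into the $(k-1)/2$ pairs $\{i,k-i\}$ gives
\[
S'(m,k)=\sum_{i=1}^{(k-1)/2}\bigl(i^{mk}+(k-i)^{mk}\bigr)\equiv 0\pmod{k^2}.
\]
For $S(m,k)$ I would simply add back the final term $k^{mk}$; since $mk\ge 2$ this term is already divisible by $k^2$, whence $S(m,k)=S'(m,k)+k^{mk}\equiv 0\pmod{k^2}$ as well.

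I do not expect a serious obstacle, since Proposition~\ref{pro:4.4} already carries the arithmetic weight. The only points requiring care are the parity step---the identity $(-1)^m=-1$ for odd $m$ is precisely what lets one pass from the $k$-th to the $(mk)$-th power, and it would fail for even $m$, consistent with the introductory remark that this divisibility is peculiar to odd exponents---and the bookkeeping in the pairing, where one must verify that the indices $1,\dots,k-1$ split exactly into the pairs $\{i,k-i\}$ with none left over. The latter holds precisely because $k$ is odd, so no index is its own partner and the middle term $k^{mk}$ is the only unpaired summand in $S(m,k)$.
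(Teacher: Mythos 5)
Your proposal is correct and follows essentially the same route as the paper: both deduce $x^{mk}+y^{mk}\equiv 0\pmod{k^2}$ from Proposition \ref{pro:4.4} by raising $x^k\equiv -y^k\pmod{k^2}$ to the odd power $m$, then conclude by pairing $i$ with $k-i$ and absorbing the unpaired term $k^{mk}$. Your write-up is merely more explicit about the pairing bookkeeping, which the paper compresses into a single ``consequently.''
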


\begin{proof}
From Proposition \ref{pro:4.4}, we have $x^k\equiv -y^k\,(mod\, k^2)$. For an odd positive integer 
$m$, $(x^k)^m\equiv -(y^k)^m\, (mod\, k^2)$ follows. Thus $x^{km}+y^{km}\equiv 0\, (mod\, k^2)$ 
when $x+y=k$. Consequently, we have \eqref{4.4}.
\end{proof}\eop

\begin{remark}\label{rem:4.3} The result on $S(m,k)$ of Proposition \ref{pro:4.5} is included in Theorem $1$ of \cite{HMF} by using  a different approach.
\end{remark}

Considering the case of odd $k$ and even $m$, we may have the following result.

\begin{proposition}\label{pro:4.6}
Let $k$ be an odd positive integer, and let $m$ be an even positive integer. Then

\bn\label{4.5}
&&S(m,k):=1^{mk}+2^{mk}+\cdots +k^{mk}\nonumber\\
&\equiv& 2 (-1)^{m/2}\sum_{u+v=k, u<v}(uv)^{mk/2}\, (mod\, k^2).
\en
\end{proposition}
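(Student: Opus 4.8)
The plan is to mimic the strategy used for odd $m$ in Proposition \ref{pro:4.5}, but to track the single term of the Girard--Waring expansion that the even parity of $m$ no longer annihilates. Fix $x,y\in\bN$ with $x+y=k$ and apply the Girard--Waring identity \eqref{eq:1.1} directly to $x^{mk}+y^{mk}$ (that is, with $n=mk$), using an index $i$ to avoid collision with the fixed odd integer $k$:
\[
x^{mk}+y^{mk}=\sum_{0\le i\le[mk/2]}(-1)^i\frac{mk}{mk-i}\binom{mk-i}{i}(xy)^i(x+y)^{mk-2i}.
\]
Since $x+y=k$, every summand carries the factor $k^{mk-2i}$, and the coefficients $\frac{mk}{mk-i}\binom{mk-i}{i}$ are integers (Lucas-type coefficients, as already noted in the proof of Proposition \ref{pro:4.4}). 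Hence every term with $mk-2i\ge 2$ is divisible by $k^2$ and drops out modulo $k^2$.

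Next I would isolate the surviving term. Because $m$ is even, $mk$ is even, so $mk-2i$ is always even; the only way to have $mk-2i<2$ is $mk-2i=0$, i.e.\ $i=mk/2$. For this index the coefficient simplifies to $\frac{mk}{mk/2}\binom{mk/2}{mk/2}=2$, the power $(x+y)^{mk-2i}=(x+y)^0=1$, and the sign is $(-1)^{mk/2}$. Using that $k$ is odd, $(-1)^{mk/2}=(-1)^{k(m/2)}=(-1)^{m/2}$. This yields the pairwise congruence
\[
x^{mk}+y^{mk}\equiv 2(-1)^{m/2}(xy)^{mk/2}\pmod{k^2},\qquad x+y=k.
\]

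Finally I would assemble $S(m,k)$ from these pairs. Grouping the terms $1,2,\dots,k-1$ into the pairs $\{u,v\}$ with $u+v=k$ and $u<v$ (possible with no fixed point, since $k$ is odd) and leaving the top term $k^{mk}$ aside gives
\[
S(m,k)=\sum_{u+v=k,\,u<v}\bigl(u^{mk}+v^{mk}\bigr)+k^{mk}.
\]
The residual term satisfies $k^{mk}\equiv 0\pmod{k^2}$ because $mk\ge 2$, and applying the pairwise congruence to each summand yields \eqref{4.5}. The only point requiring care is the bookkeeping that singles out the unique non-vanishing term of the expansion, together with the sign reduction $(-1)^{mk/2}=(-1)^{m/2}$; everything else is the same divisibility argument as in Proposition \ref{pro:4.4}, so I anticipate no genuine obstacle.
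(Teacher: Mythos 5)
Your proof is correct, but it takes a genuinely different route from the paper's. The paper argues in two stages: it first invokes Proposition \ref{pro:4.4} to get $a+b=x^{k}+y^{k}\equiv 0\pmod{k^2}$ for $x+y=k$, and then applies the Girard--Waring identity \eqref{eq:1.1} a \emph{second} time, with the small exponent $m$, to $a=x^k$ and $b=y^k$; every term of that expansion except the last carries a factor of $a+b$ and so vanishes modulo $k^2$, leaving $2(-1)^{m/2}(ab)^{m/2}=2(-1)^{m/2}(xy)^{mk/2}$. You instead expand $x^{mk}+y^{mk}$ by a single application of \eqref{eq:1.1} with the large exponent $n=mk$, using $x+y=k$ directly: since $mk$ is even, every index $i\neq mk/2$ contributes a factor $k^{mk-2i}$ with $mk-2i\ge 2$, and the unique survivor $i=mk/2$ has coefficient $\frac{mk}{mk/2}\binom{mk/2}{mk/2}=2$ and sign $(-1)^{mk/2}=(-1)^{m/2}$ because $k$ is odd. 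Your version is more self-contained --- it bypasses Proposition \ref{pro:4.4} entirely, needing only the integrality of the Lucas-type coefficients already noted there --- whereas the paper's factorization through $a=x^k$, $b=y^k$ keeps the expansion short (about $m/2$ terms rather than $mk/2$) and runs exactly parallel to the odd-$m$ argument of Proposition \ref{pro:4.5}, which makes the even/odd dichotomy in $m$ transparent. The final assembly is identical in both: pair $u+v=k$ with $u<v$ (no fixed point since $k$ is odd) and discard $k^{mk}\equiv 0\pmod{k^2}$; your sign bookkeeping and the isolation of the single non-vanishing term are both handled correctly.
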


\begin{proof}
From Proposition \ref{pro:4.4}, we have $x^k+y^k\equiv 0\, (mod\, k^2)$ if $x+y=k$. Denote $a=x^k$ and $b=y^k$. From \eqref{eq:1.1}, for an even positive 
integer $m$ 

\bns
&&a^m+b^m=\sum_{0\leq i\leq m/2}(-1)^i\frac{m}{m-i}\binom{m-i}{i}(a+b)^{m-2i}(ab)^i\\
&=&\frac{m}{m}\binom{m}{0}(a+b)^m+(-1)\frac{m}{m-1}\binom{m-1}{1}(a+b)^{m-2}(ab)^1+\cdots\\
&&\quad  +(-1)^{(m/2)-1}\frac{m}{m-(m/2)+1}\binom{\frac{m}{2}+1}{\frac{m}{2}-1}(a+b)^2(ab)^{(m/2)-1}\\
&&\quad +(-1)^{m/2}\frac{m}{m/2}\binom{\frac{m}{2}}{\frac{m}{2}}(ab)^{m/2}.
\ens
On the rightmost side of the above equation, all terms except the last one contain factor $k^2$ because $a+b=x^k+y^k$. Thus, 

\[
a^m+b^m=x^{mk}+y^{mk}\equiv 2(-1)^{m/2}(xy)^{mk/2}\, (mod\, k^2).
\]
Consequently, 

\bns
&&S(m,k):=1^{mk}+2^{mk}+\cdots +k^{mk}\\
&=&(1^{km}+(k-1)^{km})+(2^{km}+(k-2)^{km})+\cdots\\
&&\quad  +\left( \left[\frac{k}{2}\right]^{km}+\left(\left[\frac{k}{2}\right]+1\right)^{km}\right)+k^{km}\\
&\equiv& 2(-1)^{m/2}\sum_{u+v=k,u< v}(uv)^{mk/2}\, (mod\, k^2).
\ens
\end{proof}\eop

\begin{example}\label{ex:3.1} 
As examples of Proposition \ref{pro:4.6}, we have 

\bns
&&S(2,3)=(1^3)^2+(2^3)^2+(3^3)^2\equiv 2(-1)\sum_{u+v=3, 0<u<v}(uv)^3\,(mod\, 3^2)\\
&&\equiv -2(1\cdot 2)^3\, (mod\, 3^2) \equiv 2\, (mod\, 3^2)\\
&&S(4,3)=(1^3)^4+(2^3)^4+(3^3)^4\equiv 2(-1)^2\sum_{u+v=3, 0<u<v}(uv)^6\,(mod\, 3^2)\\
&&\equiv 2(1\cdot 2)^6\,(mod\, 3^2)\equiv 2\, (mod\, 3^2).
\ens
\end{example}

In general, we have the following result for sequence $(S(2\ell,3))_{\ell \geq 1}$.

\begin{corollary}\label{cor:4.7}
For $\ell=1,2,\ldots$, 

\be\label{4.6}
S(2\ell,3)\equiv 2\, (mod\, 3^2).
\ee
\end{corollary}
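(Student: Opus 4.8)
The plan is to specialize Proposition~\ref{pro:4.6} to the case $k=3$ and show that the resulting sum is constant in $\ell$ modulo $9$. First I would observe that for $k=3$ the only decomposition $u+v=3$ with $0<u<v$ is $u=1,v=2$, so the sum $\sum_{u+v=3,\,u<v}(uv)^{mk/2}$ collapses to the single term $(1\cdot 2)^{3\ell}=2^{3\ell}=8^\ell$. Taking $m=2\ell$ in \eqref{4.5} gives
\be\label{4.6-pf}
S(2\ell,3)\equiv 2(-1)^{\ell}\,8^{\ell}\,(mod\,3^2).
\ee
The whole problem now reduces to evaluating $(-1)^\ell 8^\ell=(-8)^\ell \pmod 9$.

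The key arithmetic step is that $-8\equiv 1\pmod 9$, so $(-8)^\ell\equiv 1^\ell=1\pmod 9$ for every $\ell\geq 1$. Substituting this into \eqref{4.6-pf} yields $S(2\ell,3)\equiv 2\cdot 1=2\pmod{9}$, which is exactly \eqref{4.6}. This matches the two computed instances in Example~\ref{ex:3.1}: for $\ell=1$ one has $2(-1)(8)=-16\equiv 2$, and for $\ell=2$ one has $2(1)(64)=128\equiv 2\pmod 9$.

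I do not anticipate a genuine obstacle here, since once Proposition~\ref{pro:4.6} is in hand the statement is an immediate corollary: the degeneracy of the index set for $k=3$ leaves a single product term, and the congruence $-8\equiv 1\pmod 9$ eliminates the apparent $\ell$-dependence. The only point requiring care is to confirm that $m=2\ell$ is an even positive integer (so that Proposition~\ref{pro:4.6} applies) and that the reduction $(-1)^\ell 8^\ell\equiv 1$ is performed modulo $k^2=9$ rather than modulo $3$; both are routine.
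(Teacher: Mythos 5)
Your proposal is correct and follows essentially the same route as the paper: specialize Proposition~\ref{pro:4.6} to $k=3$, $m=2\ell$, note the sum collapses to the single term $2^{3\ell}=8^\ell$, and use $8\equiv -1\pmod 9$ to cancel the $\ell$-dependence. Your packaging of the last step as $(-8)^\ell\equiv 1\pmod 9$ is just a one-line rewriting of the paper's $2(-1)^\ell(2^3)^\ell\equiv 2(-1)^\ell(-1)^\ell=2\pmod{3^2}$.
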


\begin{proof}
Example shows that  $S(2,3)\equiv 2\, (mod\, 3^2)$. Using mathematical induction, one may prove that \eqref{4.6} is true. 
However, a direct proof can be given without using induction. 

\begin{align*}
S(2\ell, 3)\equiv&2(-1)^\ell\sum_{u+v=3,0<u<v}(uv)^{3\ell}\, (mod\, 3^2)\\
=&2 (-1)^\ell2^{3\ell}\,(mod\, 3^2)\\
=&2(-1)^\ell (2^3)^\ell\,(mod\, 3^2)\\
=&2(-1)^\ell (-1)^\ell\,(mod\, 3^2)=2\,(mod\, 3^2).
\end{align*}
\end{proof}
\eop

\begin{example}\label{ex:3.2} 
This example gives a divisible case for odd $k$ and even $m$. From Proposition \ref{pro:4.6} we obtain 

\bns
&&S(2,7):=1^7+(2^7)^2+\cdots +(7^7)^2\equiv -2\sum_{u+v=7, u< v}(uv)^k\, (mod\, 7^2)\\
&\equiv&-2\left( (1\cdot 6)^7+(2\cdot 5)^7+(3\cdot 4)^7\right) \, (mod\, 7^2)\\
&\equiv& -92223488\,(mod\, 7^2)\equiv 0 \,(mod\, 7^2).
\ens
\end{example}

We now extend the result shown in Corollary \ref{cor:4.7} to $S(m,p)$ for a prime $p\geq 3$ and some $m$, which is a special case of $S(m,k)$ with odd integer $k$. In fact, \cite{HMF} gives the following result.

\begin{proposition}\label{pro:4.8}\cite{HMF}
Let $p\geq 3$ be a prime number, and let $\ell$ be a positive integer. Then 

\be\label{4.7}
S((p-1)\ell, p)\equiv (p-1)\, (mod\, p^2).
\ee
\end{proposition}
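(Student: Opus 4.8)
The goal is to prove that $S((p-1)\ell, p) \equiv (p-1) \pmod{p^2}$ for a prime $p \geq 3$ and positive integer $\ell$. Here $m = (p-1)\ell$ is even (since $p-1$ is even for odd $p$), so Proposition~\ref{pro:4.6} applies and gives
\[
S((p-1)\ell, p) \equiv 2(-1)^{(p-1)\ell/2}\sum_{u+v=p,\, u<v}(uv)^{(p-1)\ell p/2}\,(mod\, p^2).
\]
The plan is to evaluate the right-hand sum explicitly modulo $p^2$. The key structural observation is that the pairs $(u,v)$ with $u+v=p$ and $u<v$ are exactly $u=1,2,\ldots,(p-1)/2$ with $v=p-u$, so there are $(p-1)/2$ terms, and for each such pair $uv = u(p-u) \equiv -u^2 \pmod p$. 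Thus the exponent $(p-1)\ell p/2$ is the critical quantity to control, and I would first simplify $(uv)^{(p-1)\ell p/2}$ modulo $p^2$.

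First I would handle the factor $(uv)^{(p-1)\ell p/2}$ using lifting-the-exponent or a direct binomial expansion. Writing $uv = u(p-u)$, I would expand modulo $p^2$: since $u(p-u) = up - u^2$, raising to a power $N = (p-1)\ell p/2$ gives $(up-u^2)^N \equiv (-u^2)^N + N(-u^2)^{N-1}(up) \pmod{p^2}$ by the binomial theorem (all higher terms carry $p^2$). The second term contains an explicit factor of $p$, and because $N = (p-1)\ell p/2$ itself carries a factor of $p$, this cross term is divisible by $p^2$ and drops out. Hence $(uv)^N \equiv (-u^2)^N = (-1)^N u^{2N} \pmod{p^2}$, reducing everything to powers of the single integer $u$.

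Next I would evaluate $u^{2N} = u^{(p-1)\ell p} = \left(u^{p-1}\right)^{\ell p}$ modulo $p^2$. Here the main obstacle, and the crux of the argument, is that Fermat's little theorem only gives $u^{p-1}\equiv 1 \pmod p$, not modulo $p^2$; one must pass through the $p$-th power. I would use the fact that if $a \equiv 1 \pmod p$ then $a^p \equiv 1 \pmod{p^2}$ (again by the binomial theorem, since $a = 1+pt$ forces $a^p = 1 + p^2 t + \cdots \equiv 1 \pmod{p^2}$). Applying this with $a = u^{p-1}$ yields $u^{(p-1)p}\equiv 1 \pmod{p^2}$, and raising to the $\ell$ power gives $u^{(p-1)\ell p}\equiv 1 \pmod{p^2}$ for every $u$ coprime to $p$ (which holds for $1\le u\le (p-1)/2$). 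This collapses each summand $(uv)^N$ to the constant $(-1)^N$.

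Finally I would assemble the pieces. Each of the $(p-1)/2$ terms in the sum equals $(-1)^N \pmod{p^2}$, so the sum is $\frac{p-1}{2}(-1)^N$, and the prefactor $2(-1)^{(p-1)\ell/2} = 2(-1)^N$ (noting $N=(p-1)\ell p/2$ and $(p-1)\ell/2$ have the same parity because $p$ is odd) combines to give
\[
S((p-1)\ell,p)\equiv 2(-1)^N\cdot\frac{p-1}{2}(-1)^N = (p-1)(-1)^{2N} = p-1\,(mod\, p^2),
\]
which is the desired conclusion. I expect the parity bookkeeping between $(-1)^{(p-1)\ell/2}$ and $(-1)^N$ to be the only delicate point; since $p$ is odd, $p\equiv 1\pmod 2$ ensures these exponents agree in parity, so the two sign factors cancel cleanly. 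The genuinely load-bearing step is the lifting from Fermat's congruence mod $p$ to mod $p^2$ via the extra $p$-th power, which is exactly what the built-in factor of $p$ in the exponent $N$ makes available.
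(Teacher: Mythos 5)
Your proof is correct, but it takes a genuinely different route from the paper's. The paper proves \eqref{4.7} in one stroke from Euler's theorem: since $\phi(p^2)=p(p-1)$, each term with $1\leq i\leq p-1$ satisfies $i^{(p-1)\ell p}=(i^\ell)^{p(p-1)}\equiv 1\pmod{p^2}$, the term $i=p$ vanishes modulo $p^2$, and the sum is $p-1$ immediately, with no pairing and no sign bookkeeping. You instead route the argument through Proposition \ref{pro:4.6}, reducing to the pair sum with exponent $N=(p-1)\ell p/2$, and then evaluate each summand by the binomial observation $\bigl(u(p-u)\bigr)^N\equiv(-1)^N u^{2N}\pmod{p^2}$ (valid precisely because $p\mid N$ kills the cross term) combined with the lift of Fermat's congruence, $u^{p-1}\equiv 1\pmod p$ forcing $u^{(p-1)p}\equiv 1\pmod{p^2}$; all of this, including the parity matching of $(-1)^{(p-1)\ell/2}$ with $(-1)^N$, checks out. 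Your route is close in spirit to the paper's own alternative argument in Remark \ref{rem:3.1}, but is more self-contained: that remark splits on the parity of $\ell$, disposes of even $\ell$ by applying Euler's theorem to $(uv)^{\ell/2}$, and for odd $\ell$ only reduces matters to the case $\ell=1$, which it settles by citing the proposition itself; your congruence $(uv)^N\equiv(-1)^N u^{2N}$ handles both parities uniformly, and in passing it proves Corollary \ref{cor:4.9} directly rather than deriving it from \eqref{4.7} as the paper does. The trade-off: the paper's proof is shorter and needs only Euler's theorem, while yours stays inside the Girard--Waring pairing framework of Section 2 and replaces Euler's theorem by the elementary fact that $a\equiv 1\pmod p$ implies $a^p\equiv 1\pmod{p^2}$.
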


\begin{proof} We present a brief proof here for the convenience of the readers. We consider two cases. The first case is for $(a,p)=1$. Then, from Euler's phi function theorem, we have $a^{\phi (p^2)}=a^{p(p-1)}\equiv 1\, (mod\, p^2)$. Thus, for $1\leq i\leq p-1$, we have $(i^\ell)^{p(p-1)}\equiv 1 \, (mod\, p^2)$. The second case is for $p|a$. Then, $(p^\ell)^{p(p-1)}\equiv 0\,(mod\, p^2)$. Thus, we have 

\begin{align*}
S((p-1)\ell, p)=&(1^\ell)^{p(p-1)}+(2^\ell)^{p(p-1)}+\cdots +((p-1)^\ell)^{p(p-1)}+(p^\ell)^{p(p-1)}\\
\equiv &(p-1)\, (mod\, p^2).
\end{align*}
\end{proof}
\eop

\begin{corollary}\label{cor:4.9}
Let $p\geq 3$ be a prime number. Then 

\be\label{4.8}
2 \sum_{u+v=p,0<u<v}(-uv)^{p(p-1)/2}\equiv (p-1)\, (mod\, p^2).
\ee
\end{corollary}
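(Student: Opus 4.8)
The plan is to recognize that \eqref{4.8} is exactly the meeting point of the two preceding results, Proposition \ref{pro:4.6} and Proposition \ref{pro:4.8}, specialized to the right parameters. First I would apply Proposition \ref{pro:4.6} with $k=p$ (an odd prime, hence odd) and $m=p-1$ (even, since $p$ is odd), which produces
\[
S(p-1,p)\equiv 2(-1)^{(p-1)/2}\sum_{u+v=p,\,u<v}(uv)^{p(p-1)/2}\pmod{p^2}.
\]

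Next I would invoke Proposition \ref{pro:4.8} with $\ell=1$, giving immediately $S(p-1,p)\equiv p-1\pmod{p^2}$. Equating these two congruent expressions for the same quantity $S(p-1,p)$ yields
\[
2(-1)^{(p-1)/2}\sum_{u+v=p,\,u<v}(uv)^{p(p-1)/2}\equiv p-1\pmod{p^2}.
\]

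The only remaining task is to absorb the external sign $(-1)^{(p-1)/2}$ into the summand so that the base becomes $-uv$ as written in \eqref{4.8}. The key observation is that $p$ is odd, so that $p(p-1)/2\equiv (p-1)/2\pmod 2$ (writing $p=2j+1$ gives $p(p-1)/2-(p-1)/2=2j^2$), whence $(-1)^{p(p-1)/2}=(-1)^{(p-1)/2}$. Consequently $(-1)^{(p-1)/2}(uv)^{p(p-1)/2}=(-uv)^{p(p-1)/2}$ term by term, and the displayed congruence becomes precisely \eqref{4.8}. I do not expect any genuine obstacle here: once the parameters $k=p$, $m=p-1$, $\ell=1$ are chosen, the statement is a direct comparison of two already-established congruences, the lone subtlety being the elementary parity check that lets the global sign pass inside the power. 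As a sanity check, for $p=3$ the sum reduces to the single term $(-1\cdot 2)^3=-8$, so the left-hand side is $-16\equiv 2\pmod 9$, matching $p-1=2$.
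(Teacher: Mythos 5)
Your proof is correct and follows exactly the paper's route: specialize Proposition \ref{pro:4.6} to $k=p$, $m=p-1$, compare with Proposition \ref{pro:4.8} at $\ell=1$, and move the sign inside the power. Your explicit parity check that $(-1)^{p(p-1)/2}=(-1)^{(p-1)/2}$ is a detail the paper leaves tacit, but it is the same argument.
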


\begin{proof}

From Proposition \ref{pro:4.6} we have 

\bn\label{4.9}
&&S(p-1,p):=1^{p(p-1)}+2^{p(p-1)}+\cdots +p^{p(p-1)}\nonumber\\
&\equiv& 2 (-1)^{(p-1)/2}\sum_{u+v=p,0<u<v}(uv)^{p(p-1)/2}\, (mod\, p^2),
\en
which implies \eqref{4.8} after using \eqref{4.7} for $\ell=1$.
\end{proof}\eop

\begin{remark}\label{rem:3.1} 
An alternative proof of \eqref{4.7} can be represented by considering two cases. The first case is for even $\ell$. Then, by using Proposition \ref{pro:4.6} and noting $p\geq 3$ is an odd positive integer, we have 

\bn\label{4.10}
&&S((p-1)\ell,p):=1^{p(p-1)\ell}+2^{p(p-1)\ell}+\cdots +p^{p(p-1)\ell}\nonumber\\
&\equiv& 2 (-1)^{(p-1)\ell/2}\sum_{u+v=p,0<u<v}(uv)^{p(p-1)\ell/2}\, (mod\, p^2).
\en
Thus, for all $0<u<v$ with $u+v=p$

\[
(uv)^{p(p-1)\ell/2}=\left((uv)^{\ell/2}\right)^{p(p-1)}\equiv 1\,(mod\, p^2)
\]
which yields 

\[
S((p-1)\ell,p)\equiv 2(-1)^{(p-1)\ell/2}\left[ \frac{p}{2}\right] \, (mod\, p^2)\equiv 2\frac{p-1}{2}\, (mod\, p^2).
\]

The second case is for odd $\ell$. Then, by noting $\ell=2k+1$ for a positive integer $k$ and $p\geq 3$ is an odd positive integer, from Proposition \ref{pro:4.6} we have 

\bns
&&2 (-1)^{(p-1)\ell/2}\sum_{u+v=p,0<u<v}(uv)^{p(p-1)\ell/2}\\
&=&2(-1)^{(p-1)/2}\sum_{u+v=p,0<u<v}(uv)^{p(p-1)k}(uv)^{p(p-1)/2}\\
&\equiv& 2(-1)^{(p-1)/2}\sum_{u+v=p,0<u<v}(uv)^{p(p-1)/2}\, (mod\, p^2).
\ens
Thus, the problem is reduced to prove 
\[
2(-1)^{(p-1)/2}\sum_{u+v=p,0<u<v}(uv)^{p(p-1)/2}\equiv (p-1)\, (mod\, p^2),
\]
completing the proof. A short cut process for the case of odd $\ell$ maybe done as follows 

\[
S((p-1)\ell,p)\equiv S(p-1,p)\,(mod\, p^2)\equiv (p-1)\, (mod\, p^2).
\]
\end{remark}

\section{Divisibility of the sum $1^k+2^k+\cdots +n^k$}

We now establish the following general results on the divisibility of the sum

\be\label{4.0}
S_k(n):=1^k+2^k+\cdots +n^k
\ee
for odd integers $k\geq 3$ and $n\in{\bN}$. 

\begin{proposition}\label{pro:4.10}
Let $S_k(n)$ be the sum defined by \eqref{4.0}, and let $n$ and $k\geq 3$ be odd integers satisfying $k\equiv 0\, (mod\, n)$. Then 

\be\label{4.-1}
S_k(n)\equiv 0\, (mod\, n^2).
\ee
\end{proposition}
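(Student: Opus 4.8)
The plan is to exploit the symmetric pairing $j \leftrightarrow n-j$ that already drove the proofs of Propositions \ref{pro:4.4} and \ref{pro:4.5}, combined with the divisibility hypothesis $n \mid k$. First I would dispose of the trivial case $n=1$: here $S_k(1)=1$ and $n^2=1$, so the congruence holds vacuously. Thereafter I assume $n \geq 3$.

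The crucial preliminary observation is that, since both $n$ and $k$ are odd and $n \mid k$, the quotient $m := k/n$ is itself an \emph{odd} positive integer; indeed, were $m$ even, then $k = nm$ would be even, contradicting the hypothesis that $k$ is odd. This is the one point at which both hypotheses are used jointly, and it is precisely what licenses the odd-exponent step below.

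Next I would pair the summands about the midpoint. Writing
\[
S_k(n) = \sum_{j=1}^{(n-1)/2}\left(j^k + (n-j)^k\right) + n^k,
\]
each pair has base sum $j+(n-j)=n$. Applying Proposition \ref{pro:4.4} with $x=j$ and $y=n-j$ gives $j^n + (n-j)^n \equiv 0 \pmod{n^2}$, equivalently $j^n \equiv -(n-j)^n \pmod{n^2}$; raising to the odd power $m$ exactly as in the proof of Proposition \ref{pro:4.5} then yields
\[
j^{nm} + (n-j)^{nm} = j^k + (n-j)^k \equiv 0 \pmod{n^2}.
\]
Thus every paired term vanishes modulo $n^2$.

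Finally, the leftover term $n^k$ is divisible by $n^2$ because $k \geq 3 \geq 2$. Summing the contributions, $S_k(n) \equiv 0 \pmod{n^2}$, which is \eqref{4.-1}. I do not anticipate any serious obstacle here: the argument is a direct assembly of results already in hand, and the only genuine subtlety is the parity of $m = k/n$, without which the odd-power reduction inherited from Proposition \ref{pro:4.5} would fail to apply.
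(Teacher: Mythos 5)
Your proof is correct, but it takes a genuinely different route from the paper's. The paper also pairs $i$ with $n-i$, but then expands $(n-i)^k$ binomially modulo $n^2$ to get $i^k+(n-i)^k\equiv kn\,i^{k-1}\pmod{n^2}$, so that
\[
S_k(n)\equiv kn\left[1^{k-1}+2^{k-1}+\cdots+\left(\tfrac{n-1}{2}\right)^{k-1}\right]+n^k\pmod{n^2},
\]
and the hypothesis $n\mid k$ enters only at the end, forcing $n^2\mid kn$. You instead use $n\mid k$ up front to write $k=nm$ with $m$ necessarily odd, and then invoke Proposition \ref{pro:4.4} together with the odd-power reduction of Proposition \ref{pro:4.5} — in effect observing that under these hypotheses $S_k(n)=S(k/n,\,n)$, so the proposition is literally a special case of Proposition \ref{pro:4.5}. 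Your argument is more economical and modular (nothing new is proved beyond a parity observation), and your handling of $n=1$ and the parity of $m=k/n$ is exactly the right care; the paper's binomial-expansion route, by contrast, is self-contained and — more importantly — produces the intermediate congruence displayed above as \eqref{4-2}, which is reused in the proofs of Theorems \ref{thm:4.12} and \ref{thm:4.13}, where $n\nmid k$ in general and your reduction to Proposition \ref{pro:4.5} would not apply. So your shortcut proves this proposition cleanly but would not replace the paper's proof as infrastructure for the later results.
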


\begin{proof}
Noting for odd integers $n$ and $k\geq 3$ 

\[
i^k+(n-i)^k\equiv i^k+kn(-i)^{k-1}+(-i)^k\equiv kni^{k-1}\, (mod\, n^2),
\]
we may write $S_k(n)$ as

\bn
&&S_k(n)=\left[ 1^k+(n-1)^k\right]+\left[ 2^k+(n-2)^k\right]+\cdots \nonumber\\
&&+\left[ \left(\frac{n-1}{2}\right)^k+\left( \frac{n+1}{2}\right)^k\right] +n^k\nonumber\\
&\equiv &kn\left[1^{k-1}+2^{k-1}+\cdots +\left( \frac{n-1}{2}\right)^{k-1}\right]+n^k\equiv 0\, (mod\, n^2),\label{4-2}
\en
where, in the last step, we used $k\equiv 0\, (mod\, n)$. This completes the proof. 
\end{proof}\eop

In Damianou and Schumer \cite{DS}, Von Staudt-Clausen Theorem (see Theorem 118 of Hardy and Wright \cite{HW}) is applied to establish the following result.

\begin{theorem}\label{thm:4.11} \cite{DS}
Let $S_k(n)$ be the sum defined by \eqref{4.0}. Then $S_k(n)\equiv 0\, (mod\, n)$ if and only if for every prime $p$ that divides $n$, $p-1\nmid k$.
\end{theorem}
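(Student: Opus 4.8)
The plan is to reduce the congruence modulo $n$ to congruences modulo the prime-power divisors of $n$, and then to evaluate a power sum over a complete residue system in each such prime power. First I would observe that $1,2,\ldots,n$ is a complete residue system modulo $n$ and that $s^k \bmod n$ depends only on $s \bmod n$, so $S_k(n)\equiv\sum_{s=0}^{n-1}s^k \pmod n$. By the Chinese Remainder Theorem, $S_k(n)\equiv 0 \pmod n$ if and only if $S_k(n)\equiv 0 \pmod{p^a}$ for every prime power $p^a$ exactly dividing $n$. Writing $n=p^a q$ with $\gcd(q,p)=1$, the integers $0,1,\ldots,n-1$ split into $q$ blocks each forming a complete residue system modulo $p^a$, whence
\[
S_k(n)\equiv q\sum_{s=0}^{p^a-1}s^k \pmod{p^a}.
\]
Since $q$ is invertible modulo $p^a$, the entire question reduces to deciding when $p^a \mid \Sigma(p^a,k)$, where $\Sigma(p^a,k):=\sum_{s=0}^{p^a-1}s^k$.

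The heart of the argument is then the evaluation, for odd $p$,
\[
\Sigma(p^a,k)\equiv
\begin{cases}
-p^{a-1}\pmod{p^a}, & (p-1)\mid k,\\
0\pmod{p^a}, & (p-1)\nmid k.
\end{cases}
\]
To prove this I would stratify the residues by $p$-adic valuation, writing $s=p^b u$ with $\gcd(u,p)=1$, so the non-unit contributions reduce to sums of $k$-th powers of reduced residue systems modulo smaller powers of $p$. The unit sum I would evaluate using a primitive root $g$ of $(\mathbb{Z}/p^c\mathbb{Z})^\times$, which is cyclic of order $\phi(p^c)=p^{c-1}(p-1)$: the sum $\sum_u u^k$ becomes a geometric series $\sum_j g^{jk}$. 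The key point is that when $(p-1)\nmid k$ the quantity $g^k-1$ is a unit modulo $p^c$ (because $g$ is also a primitive root modulo $p$, so $g^k\not\equiv 1 \pmod p$), which forces the series to vanish, whereas when $(p-1)\mid k$ the surviving contribution has $p$-adic valuation exactly $a-1$.

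Granting the lemma, the conclusion is immediate. If $(p-1)\nmid k$ for every prime $p\mid n$, then each $\Sigma(p^a,k)\equiv 0$, so $S_k(n)\equiv 0\pmod n$. Conversely, if some prime $p\mid n$ has $(p-1)\mid k$, then $\Sigma(p^a,k)\equiv -p^{a-1}\pmod{p^a}$ has valuation $a-1<a$, so it is nonzero modulo $p^a$; multiplying by the unit $q$ preserves this, giving $S_k(n)\not\equiv 0\pmod{p^a}$ and hence $S_k(n)\not\equiv 0\pmod n$. An alternative route, closer to the cited proof, expresses $S_k(n)$ through the Bernoulli identity \eqref{0.0} and reads off the relevant primes from the denominator of $B_k$ via the Von Staudt--Clausen theorem.

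The step I expect to be the main obstacle is the prime-power evaluation of $\Sigma(p^a,k)$ modulo $p^a$ rather than merely modulo $p$: passing from the classical statement $\sum_{s=1}^{p-1}s^k\equiv -1$ or $0 \pmod p$ to the exact power $p^a$ requires careful control of the non-unit terms and of the subgroup of $k$-th powers inside the unit group. The most delicate case is $p=2$, where $p-1=1$ divides every $k$ and the unit group ceases to be cyclic for $a\ge 3$; here the clean dichotomy must be checked directly, and this is precisely the point where the roles of the parities of $n$ and $k$ enter and demand separate verification.
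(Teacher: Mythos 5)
Your architecture---reduce $S_k(n)\equiv\sum_{s=0}^{n-1}s^k\pmod n$, split by the Chinese Remainder Theorem into prime powers $p^a\,\|\,n$, pull out the unit factor $q$, and evaluate $\Sigma(p^a,k)=\sum_{s=0}^{p^a-1}s^k$ via a primitive root---is sound, and it is a genuinely different route from the source: the paper offers no proof of this theorem at all, importing it from Damianou--Schumer, whose argument runs through the Bernoulli representation \eqref{0.0} of $S_k(n)$ together with the Von Staudt--Clausen theorem. Your odd-prime lemma is correct as stated: stratifying by $p$-adic valuation, the non-unit strata $p^{bk}\sum_u u^k$ contribute valuation at least $a$, and the geometric series over the cyclic unit group gives $\Sigma(p^a,k)\equiv 0$ or $-p^{a-1}\pmod{p^a}$ according as $(p-1)\nmid k$ or $(p-1)\mid k$. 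Your elementary proof has the advantage of locating exactly where the congruence lives, at the cost of the case analysis the Bernoulli route absorbs into Von Staudt--Clausen.

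The genuine gap is the one you flagged yourself and then did not resolve: the prime $p=2$. Since $p-1=1$ divides every $k$, the ``only if'' direction of the theorem requires $\Sigma(2^a,k)\not\equiv 0\pmod{2^a}$ whenever $2\mid n$, yet your concluding paragraph applies the odd-prime formula $\Sigma(p^a,k)\equiv -p^{a-1}$ to ``some prime $p$'' without excluding $p=2$, where the primitive-root argument is unavailable for $a\ge 3$ and where the formula is false. Indeed, for odd $k\ge 3$ and $a\ge 2$, pairing $s$ with $2^a-s$ gives $\Sigma(2^a,k)\equiv 0\pmod{2^a}$; concretely, $S_3(4)=100\equiv 0\pmod 4$ even though $p=2$ divides $4$ and $p-1=1$ divides $3$, and likewise $S_3(8)=1296\equiv 0\pmod 8$. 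So the equivalence as transcribed fails precisely when $4\mid n$ and $k\ge 3$ is odd; it does hold for even $k$ (where $\Sigma(2^a,k)\equiv 2^{a-1}$), for $n\equiv 2\pmod 4$ (where $\Sigma(2,k)=1$), and for odd $n$, which are the cases the cited source actually covers. Since the paper invokes the theorem only for odd $n$ (odd prime powers in Theorem \ref{thm:4.12}, products of two odd primes in Theorem \ref{thm:4.13}), your argument is complete in every case actually used; but to prove the statement as written you would have to carry out the separate $p=2$ verification you postponed, and for $4\mid n$ that verification refutes rather than confirms the claimed equivalence, so the statement needs a parity hypothesis (odd $n$, or even $k$) that your proof cannot supply.
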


By using Theorem \ref{thm:4.11} and noting the decomposition formula shown in \eqref{4-2} we obtain the following conditions of $n$ and $k$ for the divisibility $n^2|S_k(n)$.

\begin{theorem}\label{thm:4.12}
Let $S_k\left(p^\alpha\right)$ be the sum defined by \eqref{4.0}, and let $p$ be an odd prime, $\alpha \in{\bN}$, and $k\geq 3$ an odd integers. Then the divisibility $\left(p^\alpha\right)^2|S_k\left(p^\alpha\right)$ holds if $p-1\nmid (k-1)$.
\end{theorem}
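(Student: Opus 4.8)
The plan is to reduce the assertion $(p^{\alpha})^2\mid S_k(p^{\alpha})$ to a divisibility modulo $p^{\alpha}$ for the neighboring power sum $S_{k-1}(p^{\alpha})$, and then to read that off from Theorem \ref{thm:4.11}. First I would specialize the congruence
\[
S_k(n)\equiv kn\sum_{i=1}^{(n-1)/2} i^{k-1}+n^k\pmod{n^2}
\]
that is established inside \eqref{4-2} (valid because $n$ and $k$ are odd) to $n=p^{\alpha}$. Since $k\ge 3$, the term $(p^{\alpha})^k$ is divisible by $(p^{\alpha})^2$, so the formula collapses to
\[
S_k(p^{\alpha})\equiv k\,p^{\alpha}\sum_{i=1}^{(p^{\alpha}-1)/2} i^{k-1}\pmod{(p^{\alpha})^2}.
\]
Consequently $(p^{\alpha})^2\mid S_k(p^{\alpha})$ will follow as soon as I exhibit $p^{\alpha}\mid \sum_{i=1}^{(p^{\alpha}-1)/2} i^{k-1}$.

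The next step is to recognize the half-sum $T:=\sum_{i=1}^{(p^{\alpha}-1)/2} i^{k-1}$ as essentially half of the full power sum $S_{k-1}(p^{\alpha})$. Since $p^{\alpha}$ is odd, the residues $1,2,\dots,p^{\alpha}-1$ split into the pairs $\{i,\,p^{\alpha}-i\}$ with no fixed midpoint, and because $k-1$ is even the binomial expansion gives $(p^{\alpha}-i)^{k-1}\equiv i^{k-1}\pmod{p^{\alpha}}$. Summing over the pairs and discarding the term $(p^{\alpha})^{k-1}\equiv 0\pmod{p^{\alpha}}$ yields
\[
S_{k-1}(p^{\alpha})\equiv 2T\pmod{p^{\alpha}}.
\]
Because $\gcd(2,p^{\alpha})=1$, the divisibility $p^{\alpha}\mid T$ is equivalent to $p^{\alpha}\mid S_{k-1}(p^{\alpha})$.

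Finally I would invoke Theorem \ref{thm:4.11} with exponent $k-1$ and modulus $n=p^{\alpha}$. As $p$ is the only prime dividing $p^{\alpha}$, the hypothesis $p-1\nmid(k-1)$ is exactly the condition of that theorem, so $S_{k-1}(p^{\alpha})\equiv 0\pmod{p^{\alpha}}$. By the preceding paragraph this forces $p^{\alpha}\mid T$, and tracing back through the first reduction gives $(p^{\alpha})^2\mid S_k(p^{\alpha})$, as claimed.

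I expect the only delicate point to be the symmetry step linking $T$ to $S_{k-1}(p^{\alpha})$: one must use that $p^{\alpha}$ is odd, so the pairing $i\leftrightarrow p^{\alpha}-i$ has no fixed element and the factor $2$ is invertible, and that $k-1$ is even, so the paired terms agree modulo $p^{\alpha}$. Everything else is bookkeeping with binomial expansions already implicit in \eqref{4-2}. It is worth noting that the argument in fact proves $p^{\alpha}\mid T$ outright, so the stray factor $k$ in the first reduction plays no role; the divisibility is driven entirely by Theorem \ref{thm:4.11} applied to the even exponent $k-1$.
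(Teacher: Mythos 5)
Your proposal is correct and follows essentially the same route as the paper: specialize the congruence from \eqref{4-2} to $n=p^{\alpha}$, relate the half-sum $\sum_{i=1}^{(p^{\alpha}-1)/2} i^{k-1}$ to $S_{k-1}(p^{\alpha})$ via the pairing $i\leftrightarrow p^{\alpha}-i$ (using that $k-1$ is even and $2$ is invertible modulo the odd number $p^{\alpha}$), and then invoke Theorem \ref{thm:4.11} with exponent $k-1$, where $p$ is the only prime divisor of $p^{\alpha}$. Your closing observation that the factor $k$ is immaterial and the argument really proves $p^{\alpha}\mid\sum_{i=1}^{(p^{\alpha}-1)/2} i^{k-1}$ is accurate and matches the structure of the paper's proof.
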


\begin{proof}
From \eqref{4-2}, we have 

\[
S_k(n)\equiv kn\left[1^{k-1}+2^{k-1}+\cdots +\left( \frac{n-1}{2}\right)^{k-1}\right]+n^k\, (mod\, n^2).
\]
Since for $1\leq i\leq n-1$ and odd $k\geq 3$, $i^{k-1}\equiv (n-i)^{k-1}\, (mod\, n)$ we have 

\bns
&&1^{k-1}+2^{k-1}+\cdots +\left( \frac{n-1}{2}\right)^{k-1}\\
&\equiv& (n-1)^{k-1}+(n-2)^{k-1}+\cdots +\left( n-\frac{n-1}{2}\right)^{k-1}\, (mod\, n)\\
&=&(n-1)^{k-1}+(n-2)^{k-1}+\cdots +\left( \frac{n+1}{2}\right)^{k-1}\, (mod\, n).
\ens
Consequently, for $k\geq 3$ 

\[
S_{k-1}(n)\equiv S_{k-1}(n-1)\equiv 2 \left[1^{k-1}+2^{k-1}+\cdots +\left( \frac{n-1}{2}\right)^{k-1}\right]\, (mod\, n).
\]
Particularly, 

\[
S_{k-1}\left(p^\alpha\right)\equiv S_{k-1}\left(p^\alpha-1\right)\equiv 2 \left[1^{k-1}+2^{k-1}+\cdots +\left( \frac{p^\alpha-1}{2}\right)^{k-1}\right]\, \left(mod\, p^\alpha\right).
\]
Thus, from Theorem \ref{thm:4.11} we obtain that $p-1\nmid (k-1)$ implies 

\[
1^{k-1}+2^{k-1}+\cdots +\left( \frac{p^\alpha-1}{2}\right)^{k-1}\equiv 0\, \left(mod\, p^\alpha\right),
\]
so that 

\[
S_k(p^\alpha)\equiv k p^\alpha \left[ 1^{k-1}+2^{k-1}+\cdots +\left( \frac{p^\alpha-1}{2}\right)^{k-1}\right]+(p^\alpha)^k\equiv 0\, \left(mod\, (p^\alpha)^2\right).
\]
\end{proof}\eop

\begin{theorem}\label{thm:4.13}
Let $S_k\left(pq\right)$ be the sum defined by \eqref{4.0}, and let $p$ and $q$ be distinct odd primes and $k\geq 3$ an odd integers. Then the divisibility $\left(pq\right)^2|S_k\left(pq\right)$ holds if $k\in A\backslash B$, where 
\[
A=\{k:k\in{\bN}\, and\, k\not\equiv 1\, (mod\, p-1)\}\quad \mbox{and}\quad B=\{ k:k\in{\bN}\, and\, k\equiv 1\, (mod\, q-1)\}.
\]
Furthermore, the divisibility $\left(pq\right)^2|d\cdot S_k\left(pq\right)$ holds if $k\in A$, where 

\[
d=\left\{ \begin{array}{ll} q& if \, k\equiv 1\,(mod\, q-1)\, and \, k\not=q\\1 &otherwise\end{array}.\right.
\]
\end{theorem}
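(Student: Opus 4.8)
The plan is to reduce the whole question to a single congruence for $S_k(pq)$ modulo $(pq)^2$ expressed through the lower power sum $S_{k-1}(pq)$, and then to analyze that sum modulo $p$ and modulo $q$ separately by means of Theorem \ref{thm:4.11}. First I would apply the decomposition \eqref{4-2} with $n=pq$ (legitimate since $pq$ is odd and $k\geq 3$ is odd). Dropping $(pq)^k\equiv 0\,(mod\,(pq)^2)$ gives
\[
S_k(pq)\equiv k(pq)\Big[1^{k-1}+2^{k-1}+\cdots+\big(\tfrac{pq-1}{2}\big)^{k-1}\Big]\,(mod\,(pq)^2).
\]
Because $k-1$ is even, $i^{k-1}\equiv(pq-i)^{k-1}\,(mod\,pq)$, so the bracket equals $2^{-1}S_{k-1}(pq)\,(mod\,pq)$, exactly the identity used inside the proof of Theorem \ref{thm:4.12}. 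Multiplying by $pq$ promotes this mod-$pq$ identity to a mod-$(pq)^2$ one, yielding
\[
S_k(pq)\equiv \tfrac{k}{2}\,pq\,S_{k-1}(pq)\,(mod\,(pq)^2).
\]
Consequently, for any factor $d$ the divisibility $(pq)^2\mid d\,S_k(pq)$ is equivalent to $pq\mid dk\,S_{k-1}(pq)$ (as $2$ is a unit mod $pq$), which by the Chinese Remainder Theorem splits into the two conditions $p\mid dk\,S_{k-1}(pq)$ and $q\mid dk\,S_{k-1}(pq)$.

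Next I would evaluate $S_{k-1}(pq)$ modulo each prime. Writing $1,\dots,pq$ as $q$ consecutive blocks of length $p$, each block is a complete residue system mod $p$, so $S_{k-1}(pq)\equiv q\,S_{k-1}(p)\,(mod\,p)$, and symmetrically $S_{k-1}(pq)\equiv p\,S_{k-1}(q)\,(mod\,q)$. By Theorem \ref{thm:4.11} applied with the prime itself as modulus, $p\mid S_{k-1}(p)$ exactly when $(p-1)\nmid(k-1)$, i.e.\ when $k\in A$; and when $(p-1)\mid(k-1)$ the sum $S_{k-1}(p)\equiv -1\,(mod\,p)$ is a unit. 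The same dichotomy holds for $q$ with the set $B$. In particular $k\in A$ already forces $p\mid S_{k-1}(pq)$, which settles the $p$-part of both assertions, so that everything hinges on the behaviour at $q$.

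For the first assertion, $k\in A\setminus B$ gives both $p\mid S_{k-1}(pq)$ and $q\mid S_{k-1}(pq)$, hence $pq\mid S_{k-1}(pq)$ and a fortiori $(pq)^2\mid S_k(pq)$ with $d=1$. For the second assertion I would run a short case split on the $q$-part, assuming $k\in A$. If $k\notin B$, then $q\mid S_{k-1}(pq)$ and $d=1$ suffices. If $k\in B$, then $S_{k-1}(pq)\equiv p\,S_{k-1}(q)\,(mod\,q)$ is $p$ times a unit, hence coprime to $q$, so $q\mid dk\,S_{k-1}(pq)$ demands $q\mid dk$: choosing $d=q$ when $k\neq q$ supplies the missing factor of $q$ directly, whereas in the remaining case $k=q$ one already has $q\mid k$, so $d=1$ works. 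This is exactly the piecewise definition of $d$.

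The only genuine obstacle is the bookkeeping in this last case distinction. The point to verify carefully is that the single exceptional value $k=q$ (which lies in $B$ automatically, since $q\equiv 1\,(mod\,q-1)$) is precisely the situation in which the intrinsic factor $q\mid k$ rescues the divisibility without the correction factor, so that the definition of $d$ is both necessary and sufficient; one must also confirm that inserting $d=q$ never disturbs the already-secured $p$-part, which is clear since $p\neq q$. Everything else is routine once the reduction $S_k(pq)\equiv \tfrac{k}{2}\,pq\,S_{k-1}(pq)\,(mod\,(pq)^2)$ and the block evaluation of $S_{k-1}(pq)$ modulo $p$ and $q$ are in place.
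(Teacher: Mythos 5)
Your proof is correct, but it takes a genuinely more unified route than the paper's. The paper proves the two assertions separately: for $k\in A\backslash B$ it applies Theorem \ref{thm:4.11} directly at the modulus $pq$ together with the folding identity \eqref{4-2-4} to kill the half-range sum $1^{k-1}+\cdots+\left(\frac{pq-1}{2}\right)^{k-1}$ modulo $pq$; for the second assertion it multiplies by $q$ and then decomposes this same \emph{half-range} sum into $\frac{q-1}{2}$ full blocks of length $p$ plus a residual half-block of length $\frac{p-1}{2}$ (equation \eqref{4-2-6}), requiring a second folding argument at modulus $p$ (\eqref{4-2-7}--\eqref{4-2-9}) to dispose of both pieces. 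You instead extract a single master congruence $S_k(pq)\equiv k\cdot 2^{-1}\cdot pq\cdot S_{k-1}(pq)\pmod{(pq)^2}$ from \eqref{4-2-2}, which converts the whole theorem into the elementary statement $pq\mid d\,k\,S_{k-1}(pq)$; a CRT split plus the clean full-range block evaluation $S_{k-1}(pq)\equiv q\,S_{k-1}(p)\pmod p$ (each block of $p$ consecutive integers being a complete residue system) then handles both parts in one case analysis, avoiding the paper's half-range bookkeeping entirely. Your approach buys something extra: since your reduction is an equivalence, the unit computation $S_{k-1}(pq)\equiv p\,S_{k-1}(q)\not\equiv 0\pmod q$ when $k\in B$ shows the correction factor $d=q$ is actually \emph{necessary} when $k\in A\cap B$ and $k\neq q$, a converse the paper does not state. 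One small attribution slip: the value $S_{k-1}(p)\equiv -1\pmod p$ when $(p-1)\mid(k-1)$ is the classical Fermat computation rather than a consequence of Theorem \ref{thm:4.11}, though the non-vanishing you actually use does follow from the ``only if'' direction of that theorem, so nothing in the argument is affected.
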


\begin{proof}
We now prove the first part of the theorem. Since $k$ is odd, in view of \eqref{4-2}, we have 

\be\label{4-2-2}
S_k(n)\equiv kn\left( 1^{k-1}+2^{k-1}+\cdots+\left( \frac{n-1}{2}\right)^{k-1}\right)+n^k\,(mod\, n^2).
\ee
Owing to $p-1\nmid k-1$ and $q-1\nmid k-1$, by Theorem \ref{thm:4.11}, we have 

\be\label{4-2-3}
S_{k-1}(pq)\equiv 0\, (mod\, pq).
\ee
Since $k$ is odd, we get $i^{k-1}\equiv (n-i)^{k-1}\,(mod\, n)$. It follows that 

\begin{align}
S_{k-1}(pq)=& 1^{k-1}+2^{k-1}+\cdots+\left( \frac{pq-1}{2}\right)^{k-1}\nonumber\\
+& \left( \frac{pq+1}{2}\right)^{k-1}+\cdots +(pq-1)^{k-1}+(pq)^{k-1}\nonumber\\
\equiv& 2\left( 1^{k-1}+2^{k-1}+\cdots +\left( \frac{pq-1}{2}\right)^{k-1}\right)\, (mod\, pq).\label{4-2-4}
\end{align}
Combining \eqref{4-2-3} and \eqref{4-2-4} yields that 

\[
2\left( 1^{k-1}+2^{k-1}+\cdots +\left( \frac{pq-1}{2}\right)^{k-1}\right)\equiv 0\, (mod\, pq).
\]
Under the condition that $p$ and $q$ are odd primes, we see that 

\[
\left( 1^{k-1}+2^{k-1}+\cdots +\left( \frac{pq-1}{2}\right)^{k-1}\right)\equiv 0\, (mod\, pq).
\]
which, together with \eqref{4-2-2}, implies that 

\[
S_k(pq)\equiv 0\, (mod\, (pq)^2).
\]

To prove the second part of the theorem, we use \eqref{4-2-2} to get 

\[
qS_k(n)\equiv knq\left( 1^{k-1}+2^{k-1}+\cdots +\left( \frac{n-1}{2}\right)^{k-1}\right)+qn^k\, (mod\, n^2).
\]
Since $k\geq 3$, we get 

\be\label{4-2-5}
qS_k(pq)\equiv kpq^2\left( 1^{k-1}+2^{k-1}+\cdots +\left( \frac{pq-1}{2}\right)^{k-1}\right)\, (mod\, (pq)^2).
\ee
Next we show that if $p-1\nmid k-1$, then 

\[
p| \left( 1^{k-1}+2^{k-1}+\cdots +\left( \frac{pq-1}{2}\right)^{k-1}\right).
\]
Since 

\[
\frac{pq-1}{2}=p\cdot \frac{q-1}{2}+\frac{p-1}{2},
\]
we have 

\begin{align}
&1^{k-1}+2^{k-1}+\cdots +\left( \frac{pq-1}{2}\right)^{k-1}\nonumber\\
=& (1^{k-1}+2^{k-1}+\cdots+p^{k-1})+((p+1)^{k-1}+(p+2)^{k-1}+\cdots +(2p)^{k-1})\nonumber\\
&+\cdots +\left( \left( p\frac{q-3}{2}+1\right)^{k-1}+\left( p\frac{q-3}{2}+2\right)^{k-1}+\cdots 
+\left( p\frac{q-3}{2}+p\right)^{k-1}\right)\nonumber\\
&+\left( \left( p\frac{q-1}{2}+1\right)^{k-1}+\left( p\frac{q-1}{2}+2\right)^{k-1}+\cdots 
+\left( p\frac{q-1}{2}+\frac{p-1}{2}\right)^{k-1}\right)\nonumber\\
\equiv&\frac{q-1}{2}(1^{k-1}+2^{k-1}+\cdots+p^{k-1})+\left( 1^{k-1}+2^{k-1}+\cdots +\left( \frac{p-1}{2}\right)^{k-1}\right)\, (mod\, p).\label{4-2-6}
\end{align}
Recalling that $p-1\nmid k-1$, we obtain 

\be\label{4-2-7}
S_{k-1}(p)\equiv 0\, (mod\, p).
\ee
The condition that $k\geq 3$ is odd implies $i^{k-1}\equiv (p-i)^{k-1}\,(mod\, p)$. Thus,

\begin{align}\label{4-2-8}
S_{k-1}(p)=&1^{k-1}+2^{k-1}+\cdots +\left( \frac{p-1}{2}\right)^{k-1}\nonumber\\
&+\left(\frac{p+1}{2}\right)^{k-1}+\cdots +(p-1)^{k-1}+p^{k-1}\nonumber\\
=& 2\left( 1^{k-1}+2^{k-1}+\cdots +\left( \frac{p-1}{2}\right)^{k-1}\right)\,(mod\, p).
\end{align}
Combining \eqref{4-2-7} and \eqref{4-2-8} and noting that $p$ is an odd prime, we obtain 
that 

\be\label{4-2-9}
\left( 1^{k-1}+2^{k-1}+\cdots +\left( \frac{p-1}{2}\right)^{k-1}\right)\equiv 0\, (mod\, p).
\ee
Therefore, if $k\equiv 1\,(mod\, q-1)$ and $k\not= q$, by \eqref{4-2-5}, we have 

\[
qS_k(pq)\equiv 0\, (mod\, (pq)^2).
\]
If $k=q$, \eqref{4-2-2} and \eqref{4-2-9} imply that 

\[
S_k(pq)\equiv 0\, (mod\, (pq)^2).
\]
This completes the proof.
\end{proof}
\eop

\begin{example}\label{eq:2.1} 
For $p=5$ and $q=7$, $p-1\nmid (k-1)$ or $k\not\equiv 1\,(mod\, 4)$ implies $k\in \{ 4\ell+3: \ell\in{\bN}\}$. From Theorem \ref{thm:4.13}, $S_k (35)\equiv 0\, (mod\, 35^2)$ for all $k\in\{ 4\ell+3:\ell\in {\bN}\}\backslash \{6\ell+1:\ell\in{\bN}\}$. For instance, $k=3,11,15,23,27,35,39,$ etc. Furthermore, $dS_k (35)\equiv 0\, (mod\, 35^2)$ for all $k\in\{ 4\ell+3:\ell\in {\bN}\}$, where 

\[
d=\left\{ \begin{array}{ll} 7& if \, k\equiv 1\,(mod\, 6)\, and\, k\not=7\\1 &otherwise\end{array}.\right.
\] 
Thus $dS_k (35)\equiv 0\, (mod\, 35^2)$ for $k=3,7,11,15,19,23,27,31,35,39,$ etc.

Similarly, $dS_k (55)\equiv 0\, (mod\, 55^2)$, where 

\[
d=\left\{ \begin{array}{ll} 11& if \, k\equiv 1\,(mod\, 10)\, and\, k\not=11\\1 &otherwise\end{array},\right.
\] 
for $k=3,7,11,15,19,23,27,31,35,39,43,47,51,55,$ etc. $S_k(55)\equiv 0\, (mod\, 55^2)$ for $k=3,7,11,15,19,23,27,35,39,43,47,55,$ etc. 
\end{example}

\begin{example}\label{eq:2.2} 
$S_k(6n+1)\equiv 0\, (mod\, (6n+1)^2)$ for all $0\leq n\leq 20$ and the following $k$:

\[
3,5,7,11,15,23,27,35,39, 47,59,63,75,83,87,95,99,105,107,119, 123, etc.
\]
If the $6n+1$ is replaced by $30n+1$, then we may add $9,17,29, 53,$ and $57$ in the above list.
\end{example}

\section{Faulhaber's approach for the divisibility of sums of powers of integers}
Faulhaber's formula, named after Johann Faulhaber, expresses the sum of the $k$-th powers of the first $n$ positive integers $S_k(n)$ as a $(k+1)$th degree polynomial function of $n$, the coefficients involving Bernoulli numbers $B_j$, in the form

\be\label{4-3-0}
S_k(n)=\sum^n_{\ell=1}\ell^k=\frac{1}{k+1}\sum^k_{\ell =0}\binom{k+1}{\ell}B_\ell n^{k+1-\ell},
\ee
where we use the Bernoulli number of the second kind $B_1=1/2$. If we use the Bernoulli number of the first kind $B_1=-1/2$ and noting all Bernoulli numbers with odd index $k\geq 3$ are zero, then formula \eqref{4-3-0} becomes \eqref{0.0}, i.e., 

\be\label{4-3-0-2}
S_k(n)=\sum^n_{\ell=1}\ell^k=\frac{1}{k+1}\sum^k_{\ell =0}(-1)^\ell B_\ell \binom{k+1}{\ell }n^{k+1-\ell},
\ee
where $B_1=-1/2$.

We first give a corollary of Proposition \ref{pro:4.10} and Theorem \ref{thm:4.12}.

\begin{corollary}\label{pro:4.13}
Let $S_k(n)$ be the sum defined by \eqref{4.0} with odd integers $n$ and $k\geq 3$. Then if either (i) $k\equiv 0\, (mod\, n)$ or (ii) $n=p^\alpha$, a positive integer power of a prime $p$, and $p-1\not |(k-1)$ implies 

\be\label{4-3}
\frac{1}{k+1}\sum^k_{\ell =0}\binom{k+1}{\ell }B_\ell n^{k-\ell-1} \in{\bN},
\ee
where $B_1=1/2$, and 

\be\label{4-3-2}
\frac{1}{k+1}\sum^k_{\ell =0}(-1)^\ell B_\ell \binom{k+1}{\ell }n^{k-\ell -1}\in{\bN},
\ee
where $B_1=-1/2$.
\end{corollary}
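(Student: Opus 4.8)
The plan is to recognize that the two sums displayed in \eqref{4-3} and \eqref{4-3-2} are nothing but $S_k(n)/n^2$, written out via the two normalizations of Faulhaber's formula, and then to invoke the divisibility results already established. First I would start from \eqref{4-3-0}, namely $S_k(n)=\frac{1}{k+1}\sum_{\ell=0}^k\binom{k+1}{\ell}B_\ell n^{k+1-\ell}$ with $B_1=1/2$, and divide both sides by $n^2$. This shifts each exponent $k+1-\ell$ down to $k-1-\ell$, so the right-hand side becomes exactly the expression in \eqref{4-3}; hence that expression equals $S_k(n)/n^2$. Repeating the same division on \eqref{4-3-0-2} (with $B_1=-1/2$) shows that the expression in \eqref{4-3-2} is likewise equal to $S_k(n)/n^2$. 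Since both normalizations of Faulhaber's formula compute the same quantity $S_k(n)$, the two claims collapse to the single assertion that $S_k(n)/n^2\in\bN$.

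Next I would observe that $S_k(n)=1^k+2^k+\cdots+n^k$ is manifestly a positive integer, so it remains only to verify that $n^2\mid S_k(n)$ in each of the two cases. Under hypothesis (i), with $n,k$ odd, $k\geq 3$, and $k\equiv 0\pmod n$, this is precisely the conclusion $S_k(n)\equiv 0\pmod{n^2}$ of Proposition \ref{pro:4.10}. Under hypothesis (ii), with $n=p^\alpha$ for an odd prime $p$ and $p-1\nmid(k-1)$, this is precisely the conclusion $(p^\alpha)^2\mid S_k(p^\alpha)$ of Theorem \ref{thm:4.12}. In either case the quotient $S_k(n)/n^2$ is a positive integer, so both \eqref{4-3} and \eqref{4-3-2} lie in $\bN$, as required.

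There is essentially no analytic obstacle here; the one point that warrants care is the bookkeeping of the exponent shift, and in particular confirming that dividing by $n^2$ (rather than by $n$ or by $n^{k+1}$) is exactly what turns the Faulhaber polynomial into the two stated sums, with the $\ell=1$ term being the only place where the choice $B_1=\pm1/2$ intervenes. The conceptual content is already carried by Proposition \ref{pro:4.10} and Theorem \ref{thm:4.12}; the corollary simply re-expresses their divisibility conclusions as integrality statements for the corresponding Bernoulli-number polynomials obtained from Faulhaber's formula.
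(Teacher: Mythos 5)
Your proposal is correct and matches the paper's own proof: both identify the displayed sums as $S_k(n)/n^2$ via Faulhaber's formulas \eqref{4-3-0} and \eqref{4-3-0-2}, then obtain $n^2\mid S_k(n)$ from Proposition \ref{pro:4.10} in case (i) and from Theorem \ref{thm:4.12} in case (ii). Your write-up is in fact slightly more explicit than the paper's about the exponent bookkeeping, but the argument is the same.
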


\begin{proof} In case (i), by using Proposition \ref{pro:4.10} we know $\left. n^2\right | S_k(n)$. Thus, from Faulhaber's formulas \eqref{4-3-0} and \eqref{4-3-0-2}, we obtain \eqref{4-3} and \eqref{4-3-2}.  

In case (ii), by using Theorem \ref{thm:4.12} we know $\left. (p^\alpha)^2\right |S_k(p^\alpha)$. Thus, from 
Faulhaber's formulas \eqref{4-3-0} and \eqref{4-3-0-2}, we get \eqref{4-3} and \eqref{4-3-2}. 
\end{proof}

We now consider the divisibility of the sums of powers of arithmetic sequence $\{a+(i-1)d\}_{1\leq i\leq k}$, where $(d,k)=1$, denoted by $S(a,d;k)$ and defined by 

\be\label{3.1}
S(a,d;k):=\sum^k_{i=1}(a+(i-1)d)^k=a^k+(a+d)^k+\cdots +(a+(k-1)d)^k.
\ee

\begin{theorem}\label{thm:3.1}
Let $a,d,$ and $k\in{\bN}$, where $k$ is an odd number with $(d,k)=1$, and let $S(a,d;k)$ be the series defined by \eqref{3.1}. Then 
$S(a,d;k)\equiv 0\,(mod\, k^2)$.
\end{theorem}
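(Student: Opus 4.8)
The plan is to reduce $S(a,d;k)$ modulo $k^2$ to the sum of $k$-th powers over a complete residue system mod $k$, and then to invoke Proposition \ref{pro:4.5}. The decisive structural feature is that the exponent in $S(a,d;k)$ equals $k$, the very same integer as the modulus base. Reindexing the sum by $j=i-1$, so that $S(a,d;k)=\sum_{j=0}^{k-1}(a+jd)^k$, I would first write each base in terms of its least nonnegative residue: $a+jd=r_j+kt_j$ with $r_j\in\{0,1,\ldots,k-1\}$. Expanding by the binomial theorem,
\[
(a+jd)^k=(r_j+kt_j)^k=r_j^k+\binom{k}{1}r_j^{k-1}(kt_j)+\sum_{i\ge 2}\binom{k}{i}r_j^{k-i}(kt_j)^i ,
\]
and since $\binom{k}{1}(kt_j)=k^2t_j$ while every term with $i\ge 2$ carries $(kt_j)^i$, every correction is divisible by $k^2$. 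Hence $(a+jd)^k\equiv r_j^k\pmod{k^2}$.

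Second, I would bring in the hypothesis $(d,k)=1$. As $j$ runs through $0,1,\ldots,k-1$, the products $jd$ run through a complete residue system modulo $k$, and the shifts $a+jd$ do as well; thus the residues $r_0,r_1,\ldots,r_{k-1}$ are merely a permutation of $0,1,\ldots,k-1$. Combining this with the first step gives
\[
S(a,d;k)=\sum_{j=0}^{k-1}(a+jd)^k\equiv\sum_{j=0}^{k-1}r_j^k=\sum_{r=0}^{k-1}r^k=1^k+2^k+\cdots+(k-1)^k\pmod{k^2},
\]
and the right-hand side is exactly $S'(1,k)$ in the notation of Proposition \ref{pro:4.5}.

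Finally, since $k$ is odd and $m=1$ is an odd positive integer, Proposition \ref{pro:4.5} gives $S'(1,k)\equiv 0\pmod{k^2}$, and therefore $S(a,d;k)\equiv 0\pmod{k^2}$, which is the assertion.

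The only delicate point is the first reduction, and its validity rests entirely on the exponent being equal to $k$: it is the binomial factor $\binom{k}{1}=k$, multiplied by the $k$ hidden in $a+jd-r_j$, that supplies the second power of $k$ needed to annihilate the linear correction term; for a general exponent no such cancellation occurs. I do not anticipate a genuine obstacle. Should one prefer a self-contained argument that avoids citing Proposition \ref{pro:4.5}, the closing congruence $\sum_{r=0}^{k-1}r^k\equiv 0\pmod{k^2}$ can be reproved in one line by pairing $r$ with $k-r$ and using $(k-r)^k\equiv(-r)^k=-r^k\pmod{k^2}$ for odd $k$, exactly as in the proof of Proposition \ref{pro:4.4}.
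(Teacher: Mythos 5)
Your proof is correct and takes essentially the same route as the paper's: you reduce each term $(a+(i-1)d)^k$ modulo $k^2$ to the $k$-th power of its residue via the binomial theorem (the key point in both arguments being that the exponent equals $k$, so the linear correction $\binom{k}{1}\cdot kt_j$ already carries $k^2$), use $(d,k)=1$ to see the residues form a complete system modulo $k$, and conclude by Proposition \ref{pro:4.5}. The only cosmetic difference is that you take residues in $\{0,1,\ldots,k-1\}$ and invoke $S'(1,k)$, whereas the paper takes them in $\{1,2,\ldots,k\}$ and invokes $S(1,k)$.
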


\begin{proof}
For $i=1,2,\ldots, k$ we have 

\be\label{3.2}
(a+(i-1)d)\equiv j_i\,(mod\, k),
\ee
where $j_i\in\{ 1,2, \ldots, k\}$. Since $(d,k)=1$, $j_1, j_2,\ldots,$ and $j_k$ are distinct, otherwise $j_i=j_m$ for some $i> m$ implies 
\[
(a+(i-1)d)-(a+(m-1)d)=(i-m)d\equiv 0\, (mod\, k),
\]
From $(i-m)d\equiv 0\, (mod\, k)$, we have $k|d(i-m)$. Under the condition $(d,k)=1$, we infer that $k|(i-m)$, which is contrary to the fact that $k\nmid (i-m)$ for any $1\leq m<i\leq k$. Therefore, $j_1, j_2,\ldots,$ and $j_k$ are distinct. 

From \eqref{3.2}, for some $\ell$ we have 

\bns
&&(a+(i-1)d)^k-j_i^k=(j_i+\ell k)^k-j_i^k\\
&=&j_i^{k-1}\binom{k}{1}(\ell k )+j_i^{k-2}\binom{k}{2}(\ell k)^2+\cdots+\binom{k}{k}(\ell k)^k\\
&\equiv &0\, (mod\, k^2).
\ens
Thus

\[
\sum^k_{i=1}(a+(i-1)d)^k\equiv \sum^k_{i=1}j_i^k=\sum^k_{j=1}j^k\equiv 0\, (mod\, k^2),
\]
completing the proof of the theorem. 
\end{proof}
\eop

\begin{remark}\label{rem:3.2}
If $a=d=1$, then $S(1,1;k)=S(1,k)$ and the result of Theorem \ref{thm:3.1} reduces to Proposition \ref{pro:4.5}.
\end{remark}

Because of the relation $(a+bi)^m=b^m(a/b+i)^m$, there is no loss of generality to consider the sum of the powers of $x+i$, namely, $\sum^n_{i=1}(x+i)^m$. If $x$ is a positive integer, then the last sum can be written as  

\[
\sum^n_{i=1}(x+i)^m=\sum^{n+x}_{i=1}i^m-\sum^x_{i=1}i^m.
\]
By Faulhaber's theorem, the two sums on the right-hand side are polynomials in $(n+x)(n+x+1)$ and $x(x+1)$, respectively. Using the relation 

\[
(n+x)(n+x+1)=n(n+2x+1)+x(x+1),
\]
we see that 

\[
[(n+x)(n+x+1)]^i-[x(x+1)]^i=\sum^i_{k=1}\binom{i}{k}[n(n+2x+1)]^k[x(x+1)]^{i-k},
\]
which is a polynomial in $n(n+2x+1)$. Hence, we have the following result.

\begin{proposition}\label{pro:3.2}
Let $p\geq 3$ be a prime number, and let $n$ be a positive integer with $p$ as a factor. If $x$ is a positive integer satisfying $p|(2x+1)$, then 

\[
\sum^n_{i=1}(x+i)^m\equiv 0\, (mod\, p^2)
\]
for all $m=0,1,2,\ldots.$
\end{proposition}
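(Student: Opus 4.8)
The plan is to take the reduction carried out immediately above the statement as the backbone. That reduction rewrites $S:=\sum_{i=1}^n(x+i)^m$ as $\sum_{i=1}^{n+x}i^m-\sum_{i=1}^x i^m$ and, via Faulhaber's theorem, exhibits it (for odd $m$) as a polynomial in $\mu:=n(n+2x+1)$ with vanishing constant term, say $S=\sum_{s\ge1}c_s\mu^s$. The even-$m$ case instead uses the companion Faulhaber representation, in which $\sum_{i=1}^N i^m$ equals $(2N+1)$ times a polynomial in $N(N+1)$, and the same bookkeeping must be threaded through that extra factor. Granting such a representation, the mechanism is immediate: $p\mid n$ is assumed and $p\mid(2x+1)$ forces $n+2x+1\equiv0\pmod p$, so both factors of $\mu$ are divisible by $p$ and hence $p^2\mid\mu$. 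Since every term $c_s\mu^s$ with $s\ge1$ carries a factor $\mu$, one expects $p^2\mid S$ to drop out at once.

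First I would make this rigorous $p$-adically rather than over $\bZ$, because the coefficients $c_s$ are only rational. Writing $v_p$ for the $p$-adic valuation, $v_p(c_s\mu^s)=v_p(c_s)+s\,v_p(\mu)\ge v_p(c_s)+2s$, so the target $v_p(S)\ge2$ follows as soon as $v_p(c_s)\ge2-2s$ for every $s\ge1$. The binding constraint is the case $s=1$, namely $v_p(c_1)\ge0$; for $s\ge2$ the bound $2-2s$ is already strongly negative, so those terms are comparatively harmless as long as the Bernoulli denominators are no worse than simple.

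The step I expect to be the genuine obstacle is precisely the verification of these valuation bounds, since $p^2\mid\mu$ does \emph{not} by itself transfer to $p^2\mid c_s\mu^s$ once a factor of $p$ hides in the denominator of $c_s$. Here I would invoke von Staudt--Clausen: a Bernoulli number $B_{2j}$ has $v_p(B_{2j})=-1$ exactly when $(p-1)\mid 2j$ and $v_p(B_{2j})\ge0$ otherwise, so the only possible poles are simple. The compensating resource is the hypothesis $p\mid(2x+1)$: in the explicit expansion \eqref{0.0-2} the coefficients carry factors $(x+\tfrac12)^{2j-2\ell}=\bigl((2x+1)/2\bigr)^{2j-2\ell}$, each contributing $v_p\ge1$ whenever $j>\ell$, which should absorb the Bernoulli poles on every off-diagonal term. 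The delicate point --- and the one most likely to decide the whole argument --- is the diagonal contribution $\tfrac{1}{2k}\binom{2k}{2\ell}B_{2k-2\ell}(\tfrac12)$ at $j=\ell$, which carries \emph{no} factor of $2x+1$; its $p$-integrality must be secured on separate grounds, and it is exactly here, through the interaction between $p-1$ and $m$, that the proof stands or falls.
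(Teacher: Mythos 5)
Your proposal is, by its own admission, a plan rather than a proof: the step you defer to the end --- the $p$-integrality of the coefficients in the Faulhaber expansion --- is exactly where the argument dies, and it cannot be repaired, because the proposition as stated is false. Take $p=3$, $n=3$, $x=1$ (so $p\mid n$ and $p\mid 2x+1$). For $m=0$ the sum is $n=3\not\equiv 0\pmod 9$; for $m=2$ it is $2^2+3^2+4^2=29\equiv 2\pmod 9$; and even in the odd case $m=5$ it is $2^5+3^5+4^5=1299\equiv 3\pmod 9$. The $m=5$ failure is precisely the denominator phenomenon you flagged: writing $\sum_{i=1}^N i^5=(2\lambda^3-\lambda^2)/12$ with $\lambda=N(N+1)$, the coefficient of $\mu=n(n+2x+1)$ in the differenced sum is $\lambda_0(3\lambda_0-1)/6$ with $\lambda_0=x(x+1)$; since $3\mid 2x+1$ forces $x\equiv 1\pmod 3$ and hence $3\nmid \lambda_0$, this coefficient has $3$-adic valuation $-1$, and $v_3(\mu)=2$ then yields only $v_3(S)=1$, matching $1299\equiv 3\pmod 9$. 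The defect is not special to $p=3$: for $p=5$, $m=9$, $n=5$, $x=2$ one gets $3^9+4^9+\cdots+7^9=52666255\equiv 5\pmod{25}$. Your even-$m$ branch fares even worse than you feared: differencing the representation $\sum_{i=1}^N i^m=(N+\tfrac12)\,Q(N(N+1))$ leaves a term $n\,Q(\lambda_0+\mu)$ that is divisible by $p$ only once, which is why $m=2$ already fails; no bookkeeping can restore $p^2$ there.

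For comparison, the paper's entire proof is the single observation that $p^2$ divides $\sum_{k=1}^{i}\binom{i}{k}[n(n+2x+1)]^k[x(x+1)]^{i-k}$; that is, it silently treats the rational Faulhaber coefficients multiplying these integer brackets as if they were $p$-integral, which is exactly the unjustified step your valuation analysis isolates (by von Staudt--Clausen, $p$ enters the denominators whenever $p-1$ divides the relevant even index, and for $p=3$ that is always). So your diagnosis is sharper than the paper's own argument, and your instinct that the proof ``stands or falls'' on the interaction between $p-1$ and $m$ is correct --- it falls. As a proof attempt, however, yours is incomplete by design, and no completion exists in the stated generality: the conclusion survives only for those $m$ for which the $p$-integrality you demanded can actually be verified (e.g.\ $m=1,3$ for every odd prime $p$, or odd $m$ under a von Staudt--Clausen-type restriction in the spirit of the hypothesis $p-1\nmid k-1$ of Theorem \ref{thm:4.12}), so the proposition needs an added hypothesis, not a cleverer estimate.
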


\begin{proof}
It is sufficient to notice 

\[
p^2\left |\sum^i_{k=1}\binom{i}{k}[n(n+2x+1)]^k[x(x+1)]^{i-k}\right.
\]
\end{proof}
\eop

\section{Power sum and Stirling numbers of the second kind}

The Stirling numbers of the second kind, denoted by $\stirl{n}{k}$, count the number of ways to partition a set of $n$ labelled objects into $k$ nonempty unlabelled subsets. The Stirling numbers of the second kind may also be characterized as the coefficients of the expansion of  powers of an indeterminate $x$ in terms of the falling factorials $(x)_{n}=x(x-1)(x-2)\cdots (x-n+1).$ In particular, $(x)_0 = 1$ because it is an empty product. Hence, 

\be\label{5.1}
 \sum _{k=0}^{n}\stirl{n}{k}(x)_k=x^n.
\ee
Substituting $n\to n+1$ into \eqref{5.1} and noting $\stirl{n+1}{0}=0$ for all $n\geq 0$, we have 

\be\label{5.2}
\sum_{k=1}^{n+1}\stirl{n+1}{k}(x-1)_{k-1}=x^n.
\ee

Expression of $x^k$ shown in \eqref{5.1} may help us to write the power sum $S_k(n)=\sum^n_{j=1}j^k$ as 

\bns
S_k(n)&=&\sum^n_{j=1}\sum^k_{i=0}\stirl{k}{i}(j)_i\\
&=&\sum^n_{j=1} \sum^k_{i=0}\stirl{k}{i}i!\left( {j\atop i}\right)
\ens
Since $\stirl{k}{0}=0$ for all $k> 1$, by interchanging the sums of the rightmost side of the above equation and noting

\bns
\left( {n+1\atop i+1}\right)&=& \left( {n\atop i}\right)+\left( {n-1\atop i}\right)+\cdots +\left( {i+1\atop i}\right)+\left( {i+1\atop i+1}\right)\\
&=& \left( {n\atop i}\right)+\left( {n-1\atop i}\right)+\cdots +\left( {i+1\atop i}\right)+\left( {i\atop i}\right),
\ens
we obtain 

\bns
S_k(n)&=&\sum^k_{i=1}\stirl{k}{i}i!\left( \sum^n_{j=i}\left({j\atop i}\right)\right)=\sum^k_{i=1}\stirl{k}{i}i!\left({n+1\atop i+1}\right)\\
&=&\sum^k_{i=1}\stirl{k}{i}\frac{1}{i+1}(n+1)_{i+1}=(n+1)\sum^k_{i=1}\frac{1}{i+1}\stirl{k}{i}(n)_i.
\ens

Substituting \eqref{5.2} into $S_k(n)=\sum^n_{j=1}j^k$ and noting $(j-1)_{i-1}=(i-1)!\left( {j-1\atop i-1}\right)$ and 

\[
\left( {n\atop i}\right)=\left({n-1\atop i-1}\right)+\left( {n-2\atop i-1}\right)+\cdots +\left( {i-1\atop i-1}\right),
\]
we have 

\bns
S_k(n)&=&\sum^n_{j=1}\sum^{k+1}_{i=1}\stirl{k+1}{i}(j-1)_{i-1}\\
&=&\sum^n_{j=1} \sum^{k+1}_{i=1}\stirl{k+1}{i}(i-1)!\left( {j-1\atop i-1}\right)\\
&=&\sum^{k+1}_{i=1}\stirl{k+1}{i}\left( \sum^n_{j=i}(i-1)!\left( {j-1\atop i-1}\right)\right)\\
&=&\sum^{k+1}_{i=1}\stirl{k+1}{i}(i-1)! \left( {n\atop i}\right)\\
&=&\sum^{k+1}_{i=1}\frac{1}{i}\stirl{k+1}{i}(n)_i.
\ens
Hence, we obtain the following results. 

\begin{proposition}\label{pro:5.1}
Denote $S_k(n)=\sum^n_{j=1}j^k$ and let $\stirl{n}{k}$ be the Stirling numbers of the second kind. Then 

\bn\label{5.3}
&&S_k(n)=(n+1)\sum^k_{i=1}\frac{1}{i+1}\stirl{k}{i}(n)_i\\
&&S_k(n)=\sum^{k+1}_{i=1}\frac{1}{i}\stirl{k+1}{i}(n)_i.\label{5.4}
\en
\end{proposition}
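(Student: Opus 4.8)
The plan is to derive each formula directly from the defining expansion \eqref{5.1}, namely $x^k=\sum_{i=0}^k\stirl{k}{i}(x)_i$, by specialising $x=j$, summing over $j=1,\ldots,n$, and then interchanging the order of summation. The only auxiliary ingredients I need are the conversion $(x)_i=i!\binom{x}{i}$ between falling factorials and binomial coefficients, and the column-sum (hockey-stick) identity $\sum_{j=i}^n\binom{j}{i}=\binom{n+1}{i+1}$.

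For \eqref{5.3}, I would first write $S_k(n)=\sum_{j=1}^n\sum_{i=0}^k\stirl{k}{i}(j)_i$ and discard the $i=0$ contribution, which vanishes because $\stirl{k}{0}=0$ for $k\geq 1$. Rewriting $(j)_i=i!\binom{j}{i}$ and swapping the two sums gives $S_k(n)=\sum_{i=1}^k\stirl{k}{i}\,i!\sum_{j=i}^n\binom{j}{i}$, where the inner sum runs effectively from $j=i$ since $\binom{j}{i}=0$ below that. Applying the hockey-stick identity collapses the inner sum to $\binom{n+1}{i+1}$, so that $S_k(n)=\sum_{i=1}^k\stirl{k}{i}\,i!\binom{n+1}{i+1}$. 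Finally I would simplify $i!\binom{n+1}{i+1}=\frac{1}{i+1}(n+1)_{i+1}=\frac{n+1}{i+1}(n)_i$, and factoring out $(n+1)$ produces \eqref{5.3}.

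For \eqref{5.4}, the natural starting point is the shifted expansion \eqref{5.2}, which one obtains from \eqref{5.1} by replacing $k$ with $k+1$, using $(x)_i=x(x-1)_{i-1}$, and cancelling the common factor $x$. Substituting $x=j$, converting $(j-1)_{i-1}=(i-1)!\binom{j-1}{i-1}$, and interchanging the sums yields $S_k(n)=\sum_{i=1}^{k+1}\stirl{k+1}{i}(i-1)!\sum_{j=i}^n\binom{j-1}{i-1}$. A second application of the hockey-stick identity reduces the inner sum to $\binom{n}{i}$, and then $(i-1)!\binom{n}{i}=\frac{1}{i}(n)_i$ delivers \eqref{5.4} immediately.

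No step here poses a genuine obstacle; once \eqref{5.1} and \eqref{5.2} are in hand the computation is entirely mechanical. The only points requiring care are keeping the summation bounds consistent across the interchange of sums, so that the binomial coefficients vanish automatically for $j$ below the relevant threshold, and recording explicitly that $\stirl{k}{0}=0$ for $k\geq 1$ so that the degenerate $i=0$ term may be dropped.
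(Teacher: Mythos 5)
Your proposal is correct and follows essentially the same route as the paper: both derive \eqref{5.3} from the expansion \eqref{5.1} and \eqref{5.4} from the shifted expansion \eqref{5.2}, in each case converting falling factorials to binomial coefficients, interchanging the order of summation, and collapsing the inner sum with the hockey-stick identity $\sum_{j=i}^{n}\binom{j}{i}=\binom{n+1}{i+1}$ (respectively $\sum_{j=i}^{n}\binom{j-1}{i-1}=\binom{n}{i}$). Your bookkeeping of the vanishing $i=0$ term and of the summation bounds matches the paper's computation step for step.
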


\begin{remark}\label{rem:4.3} Formula \eqref{5.3} is familiar, for instance, see $(4)$ in \cite{Hsu}.
\end{remark}

Since an odd prime $p|\stirl{p}{i}$ for all $2\leq i\leq p-1$ and $\stirl{p}{1}=\stirl{p}{p}=1$, if $n=k=p$, from \eqref{5.3} we know that 

\[
\frac{n+1}{i+1}(n)_i=\frac{p+1}{i+1}i!\left({p\atop i}\right)
\]
is divisible by $p$ when $i=1$ and $i=p$. Hence we have the following corollary. 

\begin{corollary}\label{cro:5.2}
Denote $S_k(n)=\sum^n_{j=1}j^k$. Then $p|S_p(p)$, where $p\geq 3$ is a prime. 
\end{corollary}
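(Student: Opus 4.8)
The plan is to specialize formula \eqref{5.3} to $n=k=p$, obtaining
\[
S_p(p)=(p+1)\sum_{i=1}^{p}\frac{1}{i+1}\stirl{p}{i}(p)_i,
\]
and to argue that every term on the right contributes a multiple of $p$, so that their sum $S_p(p)$ is divisible by $p$. I would begin by recording the two ingredients already assembled just before the statement: $\stirl{p}{1}=\stirl{p}{p}=1$, while $p\mid\stirl{p}{i}$ for every $2\le i\le p-1$. Then I would split the summation index into the interior range $2\le i\le p-1$ and the two endpoints $i=1$ and $i=p$, writing $T_i=(p+1)\frac{1}{i+1}\stirl{p}{i}(p)_i$ for the $i$-th term.

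For the interior indices the factor $\stirl{p}{i}$ supplies one factor of $p$, and the falling factorial $(p)_i=p(p-1)\cdots(p-i+1)$ supplies a second, since its leading factor equals $p$ while none of the remaining factors $p-1,\dots,p-i+1$ is a multiple of $p$. For the two endpoints I would use the explicit evaluations already indicated in the text: at $i=1$ the term is $(p+1)\cdot\tfrac12\cdot 1\cdot p=\tfrac{p(p+1)}{2}$, and at $i=p$ the factor $p+1$ cancels against $\tfrac{1}{i+1}$ to leave $(p)_p=p!$. Both are clearly multiples of $p$; here one uses that $p\ge 3$ is odd, so that $p+1$ is even and $\tfrac{p+1}{2}\in\bN$. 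Assembling the three regimes then shows the whole sum, and hence $S_p(p)$, is a multiple of $p$.

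The step requiring genuine care — and the main obstacle — is that the individual terms $T_i$ carry the denominators $\tfrac{1}{i+1}$ and therefore need not be integers, so the phrase ``divisible by $p$'' must be interpreted $p$-adically rather than as a statement about integers. I would make this precise by checking that each $T_i$ has $p$-adic valuation at least $1$: in the interior range the valuation is at least $2$ coming jointly from $\stirl{p}{i}$ and $(p)_i$ (the prefactor $p+1$ contributing nothing), against which the denominator $i+1\in\{3,\dots,p\}$ can cost at most $1$, and only in the single case $i+1=p$, that is $i=p-1$; at the two endpoints the valuation is exactly $1$. Since $v_p(a+b)\ge\min\{v_p(a),v_p(b)\}$, the sum of these rationals has valuation at least $1$, and because it equals the ordinary integer $S_p(p)$ we conclude $p\mid S_p(p)$. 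As a sanity check the same conclusion follows in one line from Fermat's little theorem, $\sum_{j=1}^{p}j^p\equiv\sum_{j=1}^{p}j=\tfrac{p(p+1)}{2}\equiv 0\ (\mathrm{mod}\ p)$; the merit of the route above is that it exhibits the divisibility explicitly through the mechanism of formula \eqref{5.3}.
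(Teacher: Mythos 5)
Your proposal is correct and follows essentially the same route as the paper: specialize \eqref{5.3} at $n=k=p$, use $p\mid\stirl{p}{i}$ for $2\le i\le p-1$, and evaluate the endpoint terms $i=1$ and $i=p$ directly, where $p\ge 3$ being odd makes $\tfrac{p(p+1)}{2}$ and $p!$ multiples of $p$. Your $p$-adic valuation detour, while rigorous, is avoidable: since $\frac{p+1}{i+1}\binom{p}{i}=\binom{p+1}{i+1}$, each summand equals the integer $\stirl{p}{i}\,i!\binom{p+1}{i+1}$, so the divisibility by $p$ holds term by term in the ordinary sense, which is what the paper's terse remark implicitly uses.
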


\noindent{\bf Acknowledgements}

We sincerely thank Professor William Y. C. Chen for his twice carefully reading and helpful comments of about ten pages, particularly, his full proof of Theorem 3.4 and his improvements, modification and/or simplification of the proofs of Corollary 2.11, Proposition 2.13, and Remark 2.15, as well as his corrections of typos and grammar checking, all of those lead to a quietly improved and revised version of the original manuscript.

\end{document}